\newtheorem{Theorem}{Theorem}
\renewcommand\P{{\mathbb P}}
\newcommand\E{{\mathbb E}}
\newcommand\R{{\mathbb R}}
\newcommand\I{{\mathbb I}}
\begin{document}
\title{A look at perpetuities via asymptotically
homogeneous in space Markov chains}
\author{Dmitry Korshunov\\ Lancaster University}
\date{}
\maketitle

It is shown how a natural representation of perpetuities 
as asymptotically homogeneous in space Markov chains 
allows to prove various asymptotic tail results for stable 
perpetuities and limit theorems for unstable ones.
Some of these results are new while others essentially
improve moment conditions known in the literature.
Both subexponential and Cram\'er's cases are considered.

60H25, 60J05, 60F10, 60F05;
perpetuity,
stochastic difference equation,
Markov chain,
large deviations,
subexponentiality,
regular variation,
Cram\'er's case,
limit theorems

\section{Introduction}

Let $(\xi,\eta)$ be a random vector in $\R^2$ 
such that $\P\{\xi>0\}>0$.
Let $(\xi_k,\eta_k)$, $k\ge 1$, be independent copies of $(\xi,\eta)$.
Denote $S_0:=0$ and $S_n:=\xi_1+\ldots+\xi_n$.
If $\E\xi=-a\in(-\infty,0)$ then by the strong law of large numbers,
with probability $1$, $-2an\le S_n\le -an/2$
ultimately in $n$, so the process
\begin{eqnarray}\label{D}
D_n &:=& \sum_{k=1}^n \eta_k e^{S_k},\quad n\ge 1,
\end{eqnarray}
is stochastically bounded if and only if $\E\log(1+|\eta|)<\infty$.
If so, then the perpetuity
\begin{eqnarray*}
D_\infty &:=& \sum_{k=1}^\infty \eta_k e^{S_k}
\end{eqnarray*}
is finite with probability $1$ and $D_n\stackrel{a.s.}\to D_\infty$. 
Stability results for more general $D_n$ are dealt with in Vervaat \cite{Vervaat}; 
the case where $\E\xi$ is not necessarily finite is treated 
by Goldie and Maller \cite{GoldieMaller}.

In this paper we are interested in the tail asymptotic behaviour 
of the distributions of $D_n$ and $D_\infty$.
We also consider a Markov modulated perpetuity $D_n$ and maxima
\begin{eqnarray}\label{M.n}
M_n &:=& \max_{k\le n}D_k.
\end{eqnarray}

In the context of random difference equations $R_n=B_n+A_nR_{n-1}$
with positive $A_n=e^{\xi_n}$ and $B_n=\eta_n$, the processes
\begin{eqnarray}\label{widetilde.D}
\widetilde D_n &:=& \sum_{k=1}^n \eta_k e^{S_{k-1}},
\quad \widetilde M_n\ :=\ \max_{k\le n}\widetilde D_k
\end{eqnarray}
are of interest; in particular, the distribution of $\widetilde D_\infty$
represents the unique stationary distribution of the chain $R_n$
which exists provided $\E\xi<0$ and $\E\log(1+|\eta|)<\infty$.

See e.g. Kesten \cite[Theorem 5]{Kesten1973}, 
Goldie \cite[Theorem 4.1]{Goldie1991} 
where power tail asymptotics for $\widetilde D_\infty$ is proven
under the `Cram\'er condition' $\E e^{\beta\xi}=1$ for some $\beta>0$;
see also Dyszewski \cite{Dyszewski} where the case of subexponential 
$\xi$ is considered. Some other cases are considered in Grey \cite{Grey},
Konstantinides and Mikosch \cite{KM}, Goldie and Gr\"ubel \cite{GG}.

The process $\widetilde D_n$ may be constructed as $D_n$ 
with reference vector $(\xi,\eta e^{-\xi})$ instead of $(\xi,\eta)$.
And vice versa, the process $D_n$ is the same as $\widetilde D_n$ 
with reference vector $(\xi,\eta e^\xi)$.
This allows to translate results obtained for $D_n$ 
or $\widetilde D_n$ to each other.

Both perpetuities and stochastic difference equations have many
important applications, among them life insurance and finance,
nuclear technology, sociology, 
random walks and branching processes in random environments, 
extreme-value analysis, one-dimensional ARCH processes, etc. 
For particularities, we refer the reader to, for instance, 
Embrechts and Goldie \cite{EG}, Rachev and Samorodnitsky \cite{RS} 
and Vervaat \cite{Vervaat} for a comprehensive survey of the literature.

By the definition of $D_n$,
\begin{eqnarray}\label{eq:D.D.prime}
D_{n+1} &=& \eta_1e^{\xi_1}+e^{\xi_1}(\eta_2e^{\xi_2}
+\ldots+\eta_{n+1}e^{\xi_2+\ldots+\xi_{n+1}})\nonumber\\
&=:& \eta_1e^{\xi_1}+e^{\xi_1} D'_n,
\end{eqnarray}
where
$$
D'_n:=\eta_2e^{\xi_2}+\ldots+\eta_{n+1}e^{\xi_2+\ldots+\xi_{n+1}}
=_{\rm st} D_n
$$
and $D_n'$ does not depend on $(\xi_1,\eta_1)$. Define
$$
Y_n:=f(D_n)\in\R\quad\mbox{ and }\quad Y_n':=f(D_n')\in\R.
$$
where
\begin{eqnarray}\label{eq:f}
f(x) &:=& \left\{
\begin{array}{ll}
\log x&\mbox{for }x\ge e,\\
-\log |x|&\mbox{for }x\le -e,\\
x/e&\mbox{for }x\in[-e,e],
\end{array}
\right.
\end{eqnarray}
so $f(x):\R\to\R$ is a continuous increasing function
such that $f(x)\ge \log x$ for all $x>0$ and $f(x)\le -\log|x|$ for all $x<0$.
Also define the following random field
\begin{eqnarray}\label{jump.ge.0}
\xi(y) &:=& f(\eta e^\xi+e^\xi f^{-1}(y))-y\nonumber\\
&=& f(\eta e^\xi+e^\xi e^y)-y\quad\mbox{if }y>1,\nonumber\\
&=& \xi+\log(1+\eta e^{-y})
\quad\mbox{if } y>1\mbox{ and }e^\xi(\eta+e^y)>e.
\end{eqnarray}
Then the recursion \eqref{eq:D.D.prime} may be rewritten as
\begin{eqnarray*}
Y_{n+1} &=_d& Y'_n+\xi(Y'_n).
\end{eqnarray*}

Let $\{\xi_n(y),y\in\R\}$, $n\ge 1$, be independent copies 
of the random field $\xi(y)$.
The sequence $Y_n$ is not a Markov chain, while the sequence 
$X_n$ defined by the equalities $X_1:=f(\eta_1e^{\xi_1})$ and
\begin{eqnarray*}
X_{n+1} &:=& X_n+\xi_{n+1}(X_n)
\end{eqnarray*}
is a time-homogeneous Markov chain on $\R$
due to independence of $\xi_n(y)$, $n\ge 1$;
call it the {\it associated} Markov chain.
Despite the fact that the distributions of the
sequences $\{X_n\}$ and $\{Y_n\}$ are different,
$$
X_n=_{\rm st} Y_n\quad\mbox{ for every fixed }n,
$$
which allows to compute the distribution tail of the perpetuity $D_n$ 
via the distribution tail of the Markov chain $X_n$,
\begin{eqnarray}\label{D.via.X}
\P\{D_n>x\} &=& \P\{X_n>\log x\}\quad\mbox{for }x>e.
\end{eqnarray}
In particular, the distribution of $D_\infty$ coincides on the set $[e,\infty)$
with the invariant distribution of the Markov chain $e^{X_n}$.
Here the situation is similar to that for the maximum os sums
$\max\{0,S_1,\ldots,S_n\}$;
it is not a Markov chain but coincides in distribution
with a Markov chain $W_n$ where
\begin{equation}\label{W}
W_0:=0\ \mbox{ and }\
W_n:=(W_{n-1}+\xi_n)^+
\end{equation}
(see, for example, Feller \cite[Chap. VI, section 9]{Feller}).

A Markov chain $X_n$ is called 
{\it asymptotically homogeneous in space} if the distribution 
of its jump $\xi_n(x)$ weakly converges as $x\to\infty$.
(A similar notion of additive Markov process was introduced 
by Aldous in \cite[Section C11]{Aldous} where stronger convergence 
of $\xi(x)$---in total variation---is assumed.)

The associated Markov chain $X_n$ is asymptotically 
homogeneous in space with limiting jump $\xi$; 
it is particularly emphasised by Goldie in \cite[Section 2]{Goldie1991}.
Let us underline that, in general, $\xi+\log(1+\eta e^{-x})$ 
may not converge to $\xi$ as $x\to\infty$ in total variation norm.

In the literature, some other random equations are also considered,
for example (see e.g. Goldie \cite{Goldie1991})
\begin{eqnarray*}
R_{n+1} &:=& \max(\eta_{n+1},\ e^{\xi_{n+1}}R_n),
\end{eqnarray*}
where the associated Markov chain has jumps
$\xi(x)=\max(\xi,\ \log\eta-x)$ eventually in $x$
and so it is again asymptotically space-homogeneous.

Asymptotically homogeneous in space
Markov chains were studied in \cite{BK2001,BK2002,K2004}
from the point of view of the asymptotic behaviour
of the probabilities of large deviations.
In particular, it was shown there that tail asymptotics
of the invariant measure heavily depends on the
tail properties of the limiting jump $\xi$ of the chain.
Let us recall some relevant notions.

Denote exponential moments of $\xi$ by 
$\varphi(\lambda)=\E e^{\lambda\xi}$ and consider 
$$
\beta=\sup\{\lambda\ge 0:\varphi(\lambda)\le 1\}.
$$
Since $\P\{\xi>0\}>0$, $\beta<\infty$.
In this paper we study the following two basic cases:
\begin{enumerate}
\item[(i)] $\beta=0$, the heavy-tailed case where all positive 
exponential moments of $\xi$ are infinite;
\item[(ii)] $\beta>0$ and $\varphi(\beta)=1$, the Cram\'er case.
\end{enumerate}
As clear from the theory of random walks, asymptotic behaviour 
of $\P\{X_n>x\}$ for the associated Markov chain $X_n$ 
should be very different in these two cases.

The paper is organised as follows. 
In Section \ref{sec:subexp} we start with subexponential 
asymptotics for $D_n$ and $\widetilde D_n$. 
Then in Section \ref{sec:Cramer} we study perpetuities 
in the Cram\'er case while in Section \ref{sec:Markov} 
we do it for Markov modulated perpetuities.
The last Section \ref{sec:clt} is devoted to limit theorems
for transient perpetuities.

\section{Subexponential asymptotics}
\label{sec:subexp}

In this section we consider the case where $\xi$ is heavy-tailed
and $\eta>0$, so $D_n>0$ and $\widetilde D_n>0$.
We show, in particular, that the most probable way by which large values 
of both $D_n$ and $\widetilde D_n$ do occur is a single big jump;
this principle is well known in the theory of subexponential distributions,
see e.g. \cite[Theorem 5.4]{FKZ}.
Let us first recall relevant distribution classes.
We denote by $\overline H(x)=H(x,\infty)$ the tail of a distribution $H$.

For a distribution $H$ with finite expectation, 
we define the {\it integrated tail distribution} $H_I$ on $\R^+$ by its tail:
$$
\overline H_I(x) :=
\min\Bigl(1,\int_x^\infty \overline H(y)dy\Bigr).
$$

A distribution $H$ with right unbounded support 
is called to be {\it long-tailed} if, for each fixed $y$,
$\overline H(x+y)\sim\overline H(x)$ as $x\to\infty$.

A distribution $H$ on $\R^+$
with unbounded support is called to be {\it subexponential}
if $\overline{H*H}(x)\sim 2\overline H(x)$ as $x\to\infty$.
Equivalently,
$\P\{\zeta_1+\zeta_2>x\}\sim 2\P\{\zeta_1>x\}$,
where random variables $\zeta_1$ and $\zeta_2$
are independent with distribution $H$.
A distribution $H$ of a random variable $\zeta$ on $\R$ 
with right-unbounded support is called to be {\it subexponential} 
if the distribution of $\zeta^+$ is so.
Standard examples of subexponential distributions are given by
Pareto, regularly varying, log-normal, 
Weibull with shape parameter $k<1$ distributions.

As well known (see, e.g. \cite[Lemma 3.2]{FKZ}) 
subexponentiality of $H$ on $\R^+$ implies that $H$ is long-tailed.
In~particular, if the distribution of a random variable 
$\zeta\ge 0$ is subexponential then $\zeta$ is heavy-tailed.

A distribution $H$ with right unbounded support
and finite expectation is called to be {\it strong subexponential} if 
$$
\int_0^x\overline H(y)\overline H(x-y)dy
\ \sim\ 2\overline H(x)\int_0^\infty\overline H(y)dy
\quad\mbox{as }x\to\infty.
$$
Strong subexponentiality of $H$ implies that both 
$H$ and $H_I$ are subexponential, see e.g. \cite[Theorem 3.27]{FKZ}.
Standard subexponential distributions are usually strong subexponential too.

For the perpetuity $D_n$, 
denote the distribution of the sum $\xi+\log(1+\eta)$ by $H$.

For the perpetuity $\widetilde D_n$, 
denote the distribution of the $\max(\xi,\log(1+\eta))$ by $\widetilde H$.

\begin{Theorem}\label{thm:subexp}
Suppose that $\E\xi=-a<0$, $\eta>0$ and $\E\log(1+\eta)<\infty$,
so that $D_n$ is a convergent perpetuity.
If the integrated tail distribution
$H_I$ is long-tailed, then
$$
\liminf_{x\to\infty}
\frac{\P\{D_\infty>x\}}{\overline{H_I}(\log x)} \ge \frac{1}{a}.
$$
If, in addition, the integrated tail distribution $H_I$
is subexponential then
\begin{eqnarray*}
\P\{D_\infty>x\} &\sim& \frac{1}{a}\overline H_I(\log x)
\quad\mbox{as }x\to\infty.
\end{eqnarray*}

The same results hold for $\widetilde D_\infty$ 
if the distribution $H_I$ is replaced by $\widetilde H_I$.
\end{Theorem}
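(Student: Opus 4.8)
The plan is to exploit the representation \eqref{D.via.X} and analyse the associated Markov chain $X_n$, whose invariant distribution on $[e,\infty)$ coincides with that of $e^{X_\infty}$, so that $\P\{D_\infty>x\}=\P\{X_\infty>\log x\}$ for $x>e$. Writing $z=\log x$, the problem reduces to proving $\P\{X_\infty>z\}\sim\frac{1}{a}\,\overline H_I(z)$ as $z\to\infty$, where $H$ is the law of the limiting increment $\xi+\log(1+\eta)$ of the chain. Since $\E\xi=-a<0$ and $\E\log(1+\eta)<\infty$, the chain $X_n$ has negative mean drift at infinity and a proper invariant law; this is the heavy-tailed/subexponential regime, and the expected mechanism is a single big jump. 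The strategy is therefore the classical one for asymptotically space-homogeneous Markov chains with negative drift, as developed in \cite{BK2001,BK2002,K2004}: a lower bound via a one-big-jump construction, and a matching upper bound via a renewal-type inequality, both under the stated long-tailedness (resp.\ subexponentiality) of $H_I$.

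First I would prove the lower bound $\liminf_{z\to\infty}\P\{X_\infty>z\}/\overline H_I(z)\ge 1/a$. The idea is that to reach a high level $z$, it suffices for the chain to be at some moderate level $u$ (where its increment is already well-approximated by the limiting $\xi+\log(1+\eta)$, since for $y>1$ and $e^\xi(\eta+e^y)>e$ one has $\xi(y)=\xi+\log(1+\eta e^{-y})$, which is dominated by and converges to $\xi+\log(1+\eta)$), and then to make one jump exceeding $z-u$. Summing the probability that the stationary chain sits near $u$ and then jumps above $z-u$, and using the renewal-theoretic fact that the expected number of visits near level $u$ before the big jump scales like $1/a$ (the reciprocal of the drift), one obtains a lower Riemann sum converging to $\frac{1}{a}\int_0^\infty\overline H(z+v)\,dv=\frac1a\overline H_I(z)$ up to the long-tail correction $\overline H_I(z-u)\sim\overline H_I(z)$. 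Long-tailedness of $H_I$ is exactly what legitimises pushing the truncation level $u\to\infty$ after $z\to\infty$.

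For the upper bound (needed only in the subexponential case), I would bound $X_n$ above by a dominating Markov chain whose increments are i.i.d.\ with a distribution stochastically above the law of $\xi(y)$ uniformly in $y$ — using again that $\log(1+\eta e^{-y})\le\log(1+\eta)$ so $\xi(y)\preceq_{\mathrm{st}}\xi+\log(1+\eta)$ eventually, with the finitely many exceptional small-$y$ values absorbed into a bounded correction. The maximum of the corresponding random walk with negative drift $-a$ has, by the Veraverbeke–Borovkov asymptotics for subexponential integrated tails (see \cite[Theorem 5.4]{FKZ} and the strong-subexponential machinery cited via \cite[Theorems 3.27 and 3.2]{FKZ}), tail $\sim\frac1a\overline H_I(z)$, giving $\limsup\P\{X_\infty>z\}/\overline H_I(z)\le 1/a$. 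Combining the two bounds yields the equivalence for $D_\infty$. The case of $\widetilde D_\infty$ is handled identically: the associated chain there is $D_n$ with reference vector $(\xi,\eta e^{-\xi})$, whose limiting increment is $\max(\xi,\log(1+\eta))$ with law $\widetilde H$ — indeed from \eqref{widetilde.D} the increment of the chain for $\widetilde D_n$ is $\max(\xi,\ \log\eta-y)$ eventually, converging to $\max(\xi,\log(1+\eta))$ — so every step goes through verbatim with $H$ replaced by $\widetilde H$.

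The main obstacle is the non-uniformity of the space-homogeneity: the increment $\xi(y)=\xi+\log(1+\eta e^{-y})$ converges to $\xi+\log(1+\eta)$ only in distribution, not in total variation, and is \emph{not} stochastically monotone in $y$ in general, so both the lower-bound renewal estimate and the upper-bound coupling must be done with an $\varepsilon$-buffer and a truncation level $u=u(\varepsilon)$ chosen so that $\xi(y)$ is within $\varepsilon$ (in the appropriate stochastic sense) of its limit for all $y\ge u$, with the contribution of $y<u$ controlled separately and shown to be negligible against $\overline H_I(z)$. Making the two $\varepsilon$-arguments match at the level of constants — so that the lower constant $1/a$ and the upper constant $1/a$ genuinely coincide rather than leaving a gap — is the delicate point; it is precisely here that the drift $-a$ enters as the normalising constant, via the elementary renewal theorem applied to the embedded returns of the dominating/dominated walks.
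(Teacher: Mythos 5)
Your high-level plan is essentially the paper's: pass to the associated Markov chain $X_n$, obtain a lower bound by a one-big-jump argument, and an upper bound by dominating $X_n$ with a reflected random walk. But there is a concrete gap in the upper bound, and the $\widetilde D_\infty$ part is based on an incorrect identification of the chain's jump.

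\textbf{Upper bound.} You propose to dominate the jumps by $\xi+\log(1+\eta)$ (valid for $y\ge0$) and then invoke Veraverbeke's theorem for the maximum of a random walk with drift $-a$. However the random walk with increment $\xi+\log(1+\eta)$ has drift $-a+\E\log(1+\eta)$, not $-a$; under the stated hypotheses ($\E\log(1+\eta)<\infty$, nothing smaller) this may be nonnegative, so the dominating walk need not even be stable, and at best one gets the wrong constant $1/(a-\E\log(1+\eta))$. You notice that "matching constants is the delicate point," but the mechanism is missing. The paper fixes this by choosing a truncation level $x_1$ with $\E\log(1+\eta e^{-x_1})\le\varepsilon$ (possible by dominated convergence since $\log(1+\eta e^{-y})\downarrow0$ as $y\to\infty$ with integrable majorant $\log(1+\eta)$) and using the increment $\zeta=\xi+\log(1+\eta e^{-x_1})$, which simultaneously (a) stochastically dominates $\xi(y)$ for $y\ge x_1$, (b) satisfies, via $\log(1+uv)\le\log(1+u)+\log v$ for $v\ge1$, the inequality $y+\xi(y)\le\zeta+x_1$ for $y\le x_1$ so that $X_n\le x_1+Z_n$ globally by induction, and (c) has drift $\le-(a-\varepsilon)$, so that the walk $Z_n$ is stable with the correct limiting constant as $\varepsilon\downarrow0$; moreover $\overline H(x+x_1)\le\overline{H_1}(x)\le\overline H(x)$, so the integrated tails of $H_1$ and $H$ are tail-equivalent. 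Without this $x_1$-truncation your argument does not close.

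\textbf{The $\widetilde D_\infty$ case.} You write that the increment of the chain for $\widetilde D_n$ is $\max(\xi,\log\eta-y)$ eventually, converging to $\max(\xi,\log(1+\eta))$. That formula is the jump for the iterated \emph{max} recursion $R_{n+1}=\max(\eta_{n+1},e^{\xi_{n+1}}R_n)$, not for $\widetilde D_n$; and it converges to $\xi$, not to $\max(\xi,\log(1+\eta))$. In the paper, $\widetilde D_n$ is reduced to $D_n$ with reference vector $(\xi,\eta e^{-\xi})$, so the relevant distribution is that of $\xi+\log(1+\eta e^{-\xi})=\log(e^\xi+\eta)$, and what one actually needs to show is that the integrated tail of $\log(e^\xi+\eta)$ is subexponential iff that of $\max(\xi,\log(1+\eta))$ is, which follows from the sandwich $\max(\xi,\log(1+\eta))\le\log(e^\xi+1+\eta)\le\log 2+\max(\xi,\log(1+\eta))$ together with $\log(e^\xi+1+\eta)-\log 2\le\log(e^\xi+\eta)\le\log(e^\xi+1+\eta)$. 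Your proposal does not supply this reduction.

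As a minor remark, in the lower bound the paper does not let the "moderate level" $u$ tend to infinity; it fixes a compact window $[-x_2,x_2]$ carrying stationary mass $\ge1-\varepsilon$, uses the minorant $\xi(y)\ge_{\rm st}\xi$ to ensure that after the big jump the walk drifts down at rate no worse than $a+\varepsilon$, and sums over the jump time; long-tailedness of $H_I$ is then needed only to absorb the \emph{fixed} shift $2x_2+A+a+\varepsilon$, not a diverging truncation.
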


The tail asymptotics for $D_\infty$ and $\widetilde D_\infty$ are determined 
by the distributions $H$ of $\xi+\log(1+\eta)$ and $\widetilde H$ of 
$\max(\xi,\log(1+\eta))$ respectively. If $\xi$ and $\eta$ are independent, 
then---for most standard subexponential distributions of 
$\xi$ and $\eta$---$H$ and $\widetilde H$ are tail equivalent, 
hence tail asymptotics for $D_\infty$ and $\widetilde D_\infty$
are asymptotically equivalent. The situation becomes different 
for dependent $\xi$ and $\eta$. 
For example, if $\xi=\log(1+\eta)+{\rm const}$ then
$\overline H(x)\sim \overline{\widetilde H}(x/2)$, so the tail of $H$ is
heavier than that of $\widetilde H$.

Denote the distribution of $\xi$ by $F$ and
the distribution of $\log(1+\eta)\ge0$ by $G$.
Notice that $H_I$ (and $\widetilde H_I$) is automatically subexponential
if $\xi_1$ and $\eta_1$ are independent, $F_I$ is subexponential 
and $\overline G_I(x)=o(\overline F_I(x))$ as $x\to\infty$
(see, e.g. Corollary 3.18 in \cite{FKZ}).

The tail asymptotics for $\widetilde D_\infty$ was proven
by Dyszewski in \cite[Theorem 3.1]{Dyszewski} under additional 
assumption that moment of order $1+\gamma$ of $\widetilde H$ if finite.
If $\eta$ takes values of both signs, then only asymptotic 
upper and lower bounds are known for the tail of the perpetuity, 
see Dyszewski \cite[Theorem 3.1]{Dyszewski};
we do not cover this case in the present paper.

The last theorem seems to be deducible from \cite[Theorem 3]{BK2002}
where subexponential asymptotics were proven for asymptotically
homogeneous in space Markov chains. 
But, first, it is formally assumed in \cite[Theorem 3]{BK2002}
that the distribution of a Markov chain $X_n$ converges to
the invariant distribution in total variation norm
which is not always the case for perpetuities.
Second,  perpetuity possesses some specific properties 
which allow to prove asymptotics in a more simple way 
than it is done in \cite[Theorem 3]{BK2002}; 
we present such a proof below but still it follows some ideas 
of the proof for Markov chains in \cite{BK2002}.

\begin{proof}[Proof of Theorem \ref{thm:subexp}]
In the case where $\eta>0$ and so $D_n>0$, 
it is more convenient to consider the logarithm of $D_n$, 
so that the associated Markov chain $X_n$ has jumps
$$
\xi(x) \ =\ \xi+\log(1+\eta e^{-x}),\quad x\in\R.
$$

First consider $D_\infty$.
Since $\eta>0$, the family of jumps $\xi(y)$, $y\ge 0$, 
possesses an integrable minorant
\begin{eqnarray}\label{xi.x.min}
\xi(y) &\ge_{st}& \xi.
\end{eqnarray}
Fix $\varepsilon>0$. The family of random variables $\log(1+\eta e^{-y})$, 
$y\ge 0$, possesses an integrable majorant $\log(1+\eta)$ and 
$\log(1+\eta e^{-y})\to 0$ as $y\to\infty$ in probability.
Then it follows by the dominated convergence theorem that, 
for some sufficiently large $x_1$,
\begin{eqnarray}\label{choice.x1}
\E\log(1+\eta e^{-x_1}) &\le& \varepsilon.
\end{eqnarray}
Therefore, the family of jumps $\xi(y)$, $y\ge x_1$, 
possesses an integrable majorant
\begin{eqnarray}\label{xi.x.maj}
\xi(y) &\le_{st}& \xi+\log(1+\eta e^{-x_1}).
\end{eqnarray}

Since $D_n$ is assumed to be convergent,
the associated Markov chain $X_n$ is stable, 
so there exists an $x_2>0$ such that
$$
\P\{X_n\in[-x_2,x_2]\}\ \ge\ 1-\varepsilon \quad\mbox{for all }n\ge 0.
$$
For all $k$, $n$ and $A$ consider the event
$$
B(k,n,A):=\{\xi_{k+1}+\ldots+\xi_{k+j}
\ge -A-n(a+\varepsilon)\mbox{ for all }j\le n\}.
$$
By the strong law of large numbers,
there exists a sufficiently large $A$ such that
$$
\P\{B(k,n,A)\}\ \ge\ 1-\varepsilon\quad\mbox{for all }k\mbox{ and }n.
$$
It follows from \eqref{xi.x.min} that any of the events
$$
\{X_{k-1}\in[-x_2,x_2],\ 
\xi_k+\log(\eta_k+e^{-x_2})-x_2>x+A+(n-k)(a+\varepsilon),\ B(k,n-k,A)\}
$$
implies $X_n>x$ and they are pairwise disjoint.
Taking into account the inequality
\begin{eqnarray*}
\log(\eta_k+e^{-x_2}) &=& \log(1+\eta_k)
-\log\frac{\eta_k+1}{\eta_k+e^{-x_2}}\\
&\ge& \log(1+\eta_k)-x_2,
\end{eqnarray*}
we obtain, by the Markov property,
\begin{eqnarray*}
\P\{X_n>x\} &\ge& \sum_{k=1}^{n-1}\P\{X_{k-1}\in[-x_2,x_2]\}
\overline H(x+2x_2+A+(n-k)(a+\varepsilon))\\
&&\hspace{60mm}\times \P\{B(k,n-k,A)\}\\
&\ge& (1-\varepsilon)^2\sum_{k=1}^{n-1}
\overline H(x+2x_2+A+(n-k)(a+\varepsilon)).
\end{eqnarray*}
Since the tail is a non-increasing function, the last sum is not less than
$$
\frac{1}{a+\varepsilon}
\int_{a+\varepsilon}^{n(a+\varepsilon)}
\overline H(x+2x_2+A+y)dy.
$$
Letting $n\to\infty$ we obtain that the tail
at point $x$ of the stationary distribution of the
associated Markov chain $X$ is not less than
$$
\frac{(1-\varepsilon)^2}{a+\varepsilon}
\int_{a+\varepsilon}^\infty \overline H(x+2x_1+A+y)dy
=\frac{(1-\varepsilon)^2}{a+\varepsilon}
\overline{H_I}(x+2x_2+A+a+\varepsilon).
$$
Since the integrated tail distribution $H_I$ 
is assumed to be long-tailed, 
$$
\overline{H_I}(x+2x_2+A+a+\varepsilon)\sim
\overline{H_I}(x)\quad\mbox{as }x\to\infty.
$$
Summarising altogether we deduce that,
for every fixed $\varepsilon>0$,
$$
\liminf_{x\to\infty}
\frac{\P\{D_\infty>x\}}{\overline{H_I}(\log x)}
\ge \frac{(1-\varepsilon)^2}{a+\varepsilon},
$$
which implies the lower bound of the theorem due
to the arbitrary choice of $\varepsilon>0$.

Now turn to the asymptotic upper bound under assumption that
the integrated tail distribution $H_I$ is subexponential.
Fix $\varepsilon\in(0,a)$. Let $x_1$ be defined as in \eqref{choice.x1},
so $\E\xi(x_1)\le -a+\varepsilon$. Take
$$
\zeta_n:=\xi_n+\log(1+\eta_n e^{-x_1})
$$
and let $H_1$ be its distribution. Since
$$
\xi+\log(1+\eta)-x_1\ \le\ \zeta\ \le\ \xi+\log(1+\eta),
$$
we have
$\overline H(x+x_1)\le\overline H_1(x)\le\overline H(x)$.
Then subexponentiality of $H_I$ yields subexponentiality 
of the integrated tail distribution of $H_1$ and 
$\overline H_{1,I}(x)\sim\overline H_I(x)$ as $x\to\infty$.

The jumps $\xi_n(x)$ of the chain $X_n$ possess the upper bound
\eqref{xi.x.maj} which may be rewritten as
\begin{equation}\label{xi.eta.1}
\xi_n(x) \le \zeta_n\quad\mbox{for all }x\ge x_1.
\end{equation}
In addition, by the inequality
\begin{eqnarray}\label{log.uv}
\log(1+uv) &\le& \log(1+u)+\log v\quad\mbox{for }u\ge 0,\ v\ge 1,
\end{eqnarray}
we have
\begin{eqnarray}\label{xi.eta.2}
x+\xi_n(x) &=& x+\xi_n+\log(1+\eta_ne^{-x})\nonumber\\
&\le& x+\xi_n+\log(1+\eta_ne^{-x_1})+\log e^{x_1-x}\nonumber\\
&=& \zeta_n+x_1
\quad\mbox{ for all }x\le x_1.
\end{eqnarray}
Consider a random walk $Z_n$ with delay at the origin with jumps $\zeta$'s:
$$
Z_0:=0,\ \ Z_n:=(Z_{n-1}+\zeta_n)^+.
$$
The upper bounds \eqref{xi.eta.1} and \eqref{xi.eta.2} yield that
$$
X_n\le x_1+Z_n\quad\mbox{ for all }n.
$$
so that $X_n$ is dominated by the random walk on $[x_1,\infty)$ 
with delay at point $x_1$.
Since the integrated tail distribution $H_{1,I}$ is assumed
to be subexponential, the tail of the invariant measure of 
the chain $Z_n$ is asymptotically equivalent 
to $\overline H_{1,I}(x)/(a-\varepsilon)\sim\overline H_I(x)/(a-\varepsilon)$ 
as $x\to\infty$, see, for example, \cite[Theorem 5.2]{FKZ}. 
Thus, the tail of the invariant measure of $X_n$ is asymptotically 
not greater than $\overline H_I(x-x_1)/(a-\varepsilon)$
which is equivalent to $\overline H_I(x)/(a-\varepsilon)$,
since $H_I$ is long-tailed by the subexponentiality.
Hence,
$$
\limsup_{x\to\infty} \frac{\P\{D_\infty>x\}}{\overline H_I(\log x)}
\le \frac{1}{a-\varepsilon}.
$$
By the arbitrary choice of $\varepsilon>0$
together with the lower bound proven above this completes the proof
of the second theorem assertion for $D_\infty$.

The result for $\widetilde D_\infty$ is immediate if we prove that
$\widetilde H_I$ is subexponential if and only if 
the integrated tail distribution of $\log(e^\xi+\eta)$ is so.
Indeed, since
$$
\log(e^\xi+1+\eta)-\log 2\ \le\ \log(e^\xi+\eta)\ \le\ \log(e^\xi+1+\eta)
$$
on the event $e^\xi+\eta\ge 1$, subexponentiality of 
the integrated tail distribution of $\log(e^\xi+\eta)$
is equivalent to subexponentiality of that for $\log(e^\xi+1+\eta)$.
Then inequalities
\begin{eqnarray*}
\log(e^\xi+1+\eta) &\le& \log\bigl(2\max(e^\xi,1+\eta)\bigr)
\ =\ \log 2+\max\bigl(\xi,\log(1+\eta)\bigr)
\end{eqnarray*}
and
\begin{eqnarray*}
\log(e^\xi+1+\eta) &\ge& \log\bigl(\max(e^\xi,1+\eta)\bigr)
\ =\ \max\bigl(\xi,\log(1+\eta)\bigr)
\end{eqnarray*}
imply the required conclusion.
\end{proof}

The same arguments with the same minorants and majorants
allow us to conclude the following result for the finite
time horizon asymptotics if we apply Theorem 5.3
from \cite{FKZ} instead of Theorems 5.1 and 5.2.

\begin{Theorem}\label{subexp.n}
Suppose that $\E\xi=-a<0$, $\eta>0$ and $\E\log(1+\eta)<\infty$.
If the distribution $H$ of $\xi+\log(1+\eta)$
is subexponential then, for each fixed $n\ge 1$,
\begin{eqnarray}\label{n.asy.D}
\P\{D_n>x\} &\sim& \frac{1}{a}
\int_{\log x}^{\log x+na}\overline H(y)dy
\quad\mbox{ as }x\to\infty.
\end{eqnarray}
If $H$ is strong subexponential then \eqref{n.asy.D} holds uniformly in $n\ge1$.

The same results hold for $\widetilde D_n$ 
if the distribution $H$ is replaced by $\widetilde H$.
\end{Theorem}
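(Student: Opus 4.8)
The plan is to follow the same strategy as in the proof of Theorem \ref{thm:subexp}, replacing the stationary (infinite-horizon) results of \cite{FKZ} by their finite-horizon analogue, namely \cite[Theorem 5.3]{FKZ}, which gives for a random walk with delayed reflection at the origin the finite-time tail asymptotics of $\P\{Z_n>x\}$ in terms of the truncated integral $\int_0^{na}\overline H(x+y)\,dy$ (and its uniform-in-$n$ version under strong subexponentiality). Via the identity \eqref{D.via.X}, $\P\{D_n>x\}=\P\{X_n>\log x\}$ for $x>e$, so it suffices to establish \eqref{n.asy.D} with $\log x$ in place of $\log x$ on the right, i.e. to sandwich $\P\{X_n>\cdot\}$ between two delayed random walks whose jump distributions are tail-equivalent to $H$.

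First I would set up the lower bound. Exactly as before, the family of jumps $\xi(y)=\xi+\log(1+\eta e^{-y})$ for $y\ge x_1$ has the integrable minorant $\xi(y)\ge_{\rm st}\xi$ from \eqref{xi.x.min}, and for a fixed $\varepsilon>0$ one chooses $x_1$ as in \eqref{choice.x1}. Conditioning on the chain being in $[-x_2,x_2]$ at some time $k-1$, using a single big jump of size roughly $\log x$ at step $k$, and then using the strong-law event $B(k,n-k,A)$ to keep the remaining $n-k$ steps from drifting down by more than $(a+\varepsilon)$ per step, one gets the disjoint-events bound
\begin{eqnarray*}
\P\{X_n>x\} &\ge& (1-\varepsilon)^2\sum_{k=1}^{n-1}\overline H\bigl(x+2x_2+A+(n-k)(a+\varepsilon)\bigr),
\end{eqnarray*}
and comparing the sum with an integral gives a lower bound of order $\frac{1}{a+\varepsilon}\int_{0}^{(n-1)(a+\varepsilon)}\overline H(x+2x_2+A+y)\,dy$; long-tailedness of $H$ (which follows from subexponentiality) lets one absorb the shift $2x_2+A$, and letting $\varepsilon\downarrow0$ recovers the factor $\frac1a$ and the correct upper limit $na$ of integration after the substitution $x\mapsto\log x$. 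For the upper bound I would reuse the two pointwise inequalities \eqref{xi.eta.1} and \eqref{xi.eta.2}, which together give $X_n\le x_1+Z_n$ for all $n$, where $Z_n$ is the delayed random walk with jumps $\zeta_n=\xi_n+\log(1+\eta_n e^{-x_1})$; since $\overline H(x+x_1)\le\overline H_1(x)\le\overline H(x)$, the distribution $H_1$ of $\zeta$ is tail-equivalent to $H$ and hence also subexponential (resp. strong subexponential). Applying \cite[Theorem 5.3]{FKZ} to $Z_n$ then yields $\P\{X_n>x\}\le(1+o(1))\frac{1}{a-\varepsilon}\int_0^{n(a-\varepsilon)}\overline H(x-x_1+y)\,dy$; long-tailedness removes the $x_1$-shift, and $\varepsilon\downarrow0$ closes the gap. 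Under strong subexponentiality of $H$ all of this is uniform in $n\ge1$ because \cite[Theorem 5.3]{FKZ} is, and because the integral $\int_{\log x}^{\log x+na}\overline H(y)\,dy$ is itself the natural uniform object.

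The result for $\widetilde D_n$ is obtained in exactly the same way: the associated chain for $\widetilde D_n$ has jumps $\widetilde\xi(y)=\max(\xi,\log\eta-y)$ eventually in $y$ (equivalently one works with $\log(e^\xi+\eta)$), which again satisfies $\widetilde\xi(y)\ge_{\rm st}\xi$ and an integrable majorant for large $y$, and $\widetilde H$ is tail-equivalent to the integrated tail of $\log(e^\xi+\eta)$ by the elementary sandwich $\max(\xi,\log(1+\eta))\le\log(e^\xi+1+\eta)\le\log2+\max(\xi,\log(1+\eta))$ already used at the end of the previous proof; so subexponentiality (resp. strong subexponentiality) of $\widetilde H$ transfers to the relevant jump distribution and the same two-sided comparison applies. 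The main obstacle I anticipate is purely bookkeeping rather than conceptual: making the error terms in the lower-bound integral comparison genuinely uniform in $n$ (so that the limits of integration come out as exactly $na$ and not $na+O(1)$ in a way that survives the $x\to\infty$ limit uniformly), and checking that the $o(1)$ in \cite[Theorem 5.3]{FKZ} for the delayed walk $Z_n$ is uniform in $n$ in the strong-subexponential case — but since the excerpt explicitly invokes that theorem for precisely this purpose, these are routine.
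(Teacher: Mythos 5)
Your proposal matches the paper's approach exactly: the paper gives no detailed proof for Theorem~\ref{subexp.n}, only the one-sentence remark that the same minorants and majorants as in the proof of Theorem~\ref{thm:subexp} work once \cite[Theorem 5.3]{FKZ} (the finite-horizon, uniform-in-$n$ tail asymptotics for a delayed random walk) is substituted for Theorems 5.1 and 5.2. You have reconstructed precisely this argument, correctly reusing the sandwich $X_n\le x_1+Z_n$ with the tail-equivalent jump distribution $H_1$ for the upper bound, the disjoint single-big-jump events for the lower bound, and the elementary comparison $\max(\xi,\log(1+\eta))\le\log(e^\xi+1+\eta)\le\log 2+\max(\xi,\log(1+\eta))$ to transfer the result to $\widetilde D_n$.
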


The main contribution of this theorem is assertion stating
uniformity in $n\ge 1$. The simple part stating \eqref{n.asy.D}
for $\widetilde D_n$ for a fixed $n$ is proven by Dyszewski
in \cite[Theorem 3.3]{Dyszewski}.

We conclude this section by a version of the principle of a single big
jump for $D_\infty$, $D_n$, $\widetilde D_\infty$, and $\widetilde D_n$. 
For simplicity we consider the case 
where $\eta\ge\delta$ for some constant $\delta>0$. Then
\begin{eqnarray}\label{log.eta.delta}
\log(1+\eta) &\le& \log\eta+\log(1+1/\delta).
\end{eqnarray}
For any $C>0$ and $\varepsilon>0$ consider events
\begin{eqnarray*}
B_k &:=& \bigl\{|S_j+aj|\le (j\varepsilon+C)/2
\mbox{ and }\log\eta_j\le (j\varepsilon+C)/2\mbox{ for all }j\le k,\\
&&\hspace{65mm} \xi_{k+1}+\log\eta_{k+1}>\log x+ka\bigr\}
\end{eqnarray*}
which, for large $x$, roughly speaking means that
up to time $k$ the random walk $S_j$ moves down
according to the strong law of large numbers and
then a big value of $\xi_{k+1}+\log\eta_{k+1}$ occurs for some $k$. 
As stated in the next theorem, the union of these events
describes the most probable way by which large
deviations of $D_\infty$ and $D_n$ can occur.

For $\widetilde D_n$, we consider events
\begin{eqnarray*}
\widetilde B_k &:=& \bigl\{|S_j+aj|\le (j\varepsilon+C)/2,\
\max(\xi_j,\log\eta_j)\le (j\varepsilon+C)/2\mbox{ for all }j\le k,\\
&&\hspace{58mm} \max(\xi_{k+1},\log\eta_{k+1})>\log x+ka\bigr\}.
\end{eqnarray*}

\begin{Theorem}\label{th:psbj}
Let $H_I$ be subexponential.
Then, for any fixed $\varepsilon>0$,
\begin{eqnarray*}
\lim_{C\to\infty}\lim_{x\to\infty}
\P\{\cup_{k=0}^\infty B_k\mid D_\infty>x\} &=& 1.
\end{eqnarray*}
If, in addition, $H$ is strong subexponential,
then, for any fixed $\varepsilon>0$,
\begin{eqnarray*}
\lim_{C\to\infty}\lim_{x\to\infty}
\inf_{n\ge 1}\P\{\cup_{k=0}^{n-1} B_k\mid D_n>x\} &=& 1.
\end{eqnarray*}

The same results hold for $\widetilde D_\infty$ and $\widetilde D_n$
if the distribution $H$ is replaced by $\widetilde H$ 
and events $B_k$ by $\widetilde B_k$.
\end{Theorem}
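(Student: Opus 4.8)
The plan is to use the same single-big-jump decomposition that underlies Theorems \ref{thm:subexp} and \ref{subexp.n}, but now to extract the conditional statement rather than just the tail constant. First I would fix $\varepsilon>0$ and work with $D_\infty$. The lower bound from the proof of Theorem \ref{thm:subexp} already shows that $\P\{D_\infty>x\}\sim \frac1a\overline{H_I}(\log x)$, and, by inspecting that argument, the mass comes precisely from the disjoint events in which $X_{k-1}$ sits in a bounded window, the jump $\xi_k(X_{k-1})$ is large (of order $\log x + (n-k)a$), and the subsequent increments obey the law of large numbers. Translating back to the perpetuity via \eqref{D.via.X} and using \eqref{log.eta.delta} (valid since $\eta\ge\delta$) to replace $\log(1+\eta_j)$ by $\log\eta_j$ up to an additive constant absorbed into $C$, these are exactly the events $B_k$. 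So the first step is: show $\sum_{k\ge0}\P\{B_k\}\ge (1-o(1))\,\frac1a\overline{H_I}(\log x)$ as $x\to\infty$, uniformly for $C$ large, by the same minorant computation \eqref{xi.x.min}–\eqref{choice.x1}.

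The second step is the matching upper bound on the complement: $\P\{D_\infty>x,\ \cap_{k}B_k^c\}=o(\overline{H_I}(\log x))$. Here I would split according to whether no single increment $\xi_{k+1}+\log\eta_{k+1}$ is large but the sum $D_\infty$ nonetheless exceeds $x$ — this is where subexponentiality of $H_I$ is used, in the form of the standard estimate that the invariant tail cannot be produced by accumulation of many moderate jumps (Theorem 5.2 of \cite{FKZ} applied to the dominating random walk $Z_n$ from the proof of Theorem \ref{thm:subexp}) — versus the events where some increment is large but the random walk $S_j$ or some $\log\eta_j$ has already deviated from its typical linear behaviour before the big jump, which is negligible by the law of large numbers once $C\to\infty$. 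Because $X_n\le x_1+Z_n$ and the invariant tail of $Z_n$ is $\sim\overline{H_I}(x)/(a-\varepsilon)$, the single-big-jump principle for the random walk $Z_n$ (again \cite[Theorem 5.2]{FKZ}) localises the contribution to exactly one large $\zeta$, and unwinding $\zeta_k=\xi_k+\log(1+\eta_ke^{-x_1})$ back to $\xi_k+\log\eta_k$ identifies that big jump with the defining inequality in $B_k$. Combining the two steps and letting first $x\to\infty$ then $C\to\infty$ gives the conditional limit $1$.

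For the finite-horizon statement I would run the same argument but with the uniform-in-$n$ finite-time estimate: the dominating delayed random walk $Z_n$ satisfies, under strong subexponentiality of $H$, the bound $\P\{Z_n>x\}\sim\frac1a\int_{x}^{x+na}\overline H(y)dy$ uniformly in $n\ge1$ (this is \cite[Theorem 5.3]{FKZ}, invoked just as in Theorem \ref{subexp.n}), together with the corresponding uniform single-big-jump principle for finite-time random-walk maxima. The lower bound via the minorant $\xi(y)\ge_{st}\xi$ is already uniform in $n$ because the events $B_k$ for $k\le n-1$ are disjoint and their probabilities sum to the right quantity. The only care needed is that the law-of-large-numbers event $B(k,n-k,A)$ has probability $\ge 1-\varepsilon$ uniformly in $k,n$, which is exactly how it was set up in the proof of Theorem \ref{thm:subexp}. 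Finally, the assertion for $\widetilde D_\infty$ and $\widetilde D_n$ follows by the same substitution of majorants and minorants with $\xi+\log(1+\eta)$ replaced by $\max(\xi,\log(1+\eta))$, using the two-sided bounds relating $\log(e^\xi+\eta)$, $\log(e^\xi+1+\eta)$ and $\max(\xi,\log(1+\eta))$ recorded at the end of the proof of Theorem \ref{thm:subexp}, and replacing $B_k$ by $\widetilde B_k$ throughout.

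The main obstacle I expect is the upper bound on the bad event: one must be careful that conditioning on $D_\infty>x$ does not secretly favour trajectories where the random walk $S_j$ drifts upward for a long stretch before the big jump (which would still respect the single-big-jump picture at the level of the tail constant but violate the quantitative window $|S_j+aj|\le(j\varepsilon+C)/2$ in $B_k$). Controlling this requires an exponential/LLN tail bound on $\sup_{j}(S_j+aj-j\varepsilon/2)$ that is summable against $\overline H$, i.e. a uniform (in the location of the big jump) decoupling of the pre-jump and post-jump dynamics — the standard subexponential ``one big jump'' lemmas in \cite[Chapter 5]{FKZ} give exactly this, so the difficulty is in assembling them correctly rather than proving anything genuinely new.
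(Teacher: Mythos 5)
Your framework is directionally right, but you have taken on a genuinely harder job than the paper does, and the part you flag as the ``main obstacle'' is precisely the part the paper never needs to do. You propose a two-sided argument: a lower bound on the mass of $\cup_k B_k$ and an upper bound on the ``bad'' event $\{D_\infty>x\}\cap\bigcap_k B_k^c$, the latter requiring control of trajectories that exceed $x$ without any single dominant increment and of trajectories where the walk deviates before the big jump. The paper's proof is purely one-sided. It introduces shrunken events $B_k^*\subset B_k$ built so that (a) each $B_k^*$ forces $D_n>x$ outright, (b) they are pairwise disjoint, and (c) $\sum_k\P\{B_k^*\}$ is bounded below by $(1-\gamma)(a+\varepsilon)^{-1}\int_{\log x}^{\log x+n(a+\varepsilon)}\overline H(y)\,dy$. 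Dividing by the known asymptotics for $\P\{D_n>x\}$ from Theorem \ref{subexp.n} immediately gives
\[
\lim_{C\to\infty}\lim_{x\to\infty}\inf_{n\ge 1}\P\{\cup_{k}B_k\mid D_n>x\}\ \ge\ \frac{a}{a+\varepsilon},
\]
which is strictly less than $1$ and would seem insufficient. The final step, which you do not have, is the observation that the events $B_k$ (hence the left-hand side) are monotone increasing in $\varepsilon$: so the limit at any fixed $\varepsilon$ dominates the limit at every smaller $\varepsilon'$, which is $\ge a/(a+\varepsilon')\to 1$. This monotonicity bootstrap yields the value $1$ for every fixed $\varepsilon$ with no need for an upper bound on the complementary event. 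That is the idea missing from your proposal; without it, you are forced into the delicate upper-bound estimates you worry about, which the paper entirely avoids. The remainder of your sketch (disjointness via the LLN window, use of $\eta\ge\delta$ and \eqref{log.eta.delta}, the substitution $\max(\xi,\log(1+\eta))$ for the $\widetilde D$ case) matches the paper's computations.
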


\begin{proof}
We prove the assertion for $D_n$ only, 
because the proof for $D_\infty$ is similar.

Since each of the events
\begin{eqnarray*}
B_k^* &:=& \bigl\{|S_j+aj|\le (j\varepsilon+C)/2\mbox{ and }
\log\eta_j\le (j\varepsilon+C)/2\mbox{ for all }j\le k,\\
&&\hspace{25mm} \xi_{k+1}+\log\eta_{k+1}>\log x+C+k(a+\varepsilon)\bigr\},
\quad k\le n-1,
\end{eqnarray*}
is contained in $B_k$ and implies that 
\begin{eqnarray*}
\eta_{k+1}e^{S_{k+1}} &=& e^{S_k+\xi_{k+1}+\log\eta_{k+1}}\ \ge\ e^{\log x},
\end{eqnarray*}
so that $D_n>x$, we consequently have that
\begin{eqnarray}\label{uni.B.M.ge}
\P\{\cup_{k=0}^n B_k\mid D_n>x\}
&\ge& \P\{\cup_{k=0}^{n-1}B_k^*\mid D_n>x\}
= \frac{\P\{\cup_{k=0}^{n-1}B_k^*\}}
{\P\{D_n>x\}}.
\end{eqnarray}
The events $B_k^*$ are disjoint for all $x>e^C$ because
$S_j+\log\eta_j\le (-a+\varepsilon)j+C$ for $j\le k$ on $B_k^*$ while 
\begin{eqnarray*}
S_{k+1}+\log\eta_{k+1} &=& S_k+\xi_{k+1}+\log\eta_{k+1}\\
&\ge& (-a-\varepsilon/2)k-C/2+\log x+C+k(a+\varepsilon)\\
&\ge& \log x,
\end{eqnarray*}
hence
\begin{eqnarray*}
\P\{\cup_{k=0}^{n-1}B_k^*\}
&=& \sum_{k=0}^{n-1}\P\{B_k^*\}\\
&=& \sum_{k=0}^{n-1}\P\Bigl\{|S_j+aj|\le \frac{j\varepsilon+C}{2},
\ \log\eta_j\le \frac{j\varepsilon+C}{2}\mbox{ for all }j\le k\Bigr\}\\
&&\hspace{25mm}\times \P\bigr\{\xi_{k+1}+\log\eta_{k+1}>\log x+C+k(a+\varepsilon)\bigr\}.
\end{eqnarray*}
For any fixed $\gamma>0$, there exists $C$ such that, 
\begin{eqnarray*}
\P\Bigl\{|S_j+aj|\le \frac{j\varepsilon+C}{2},
\ \log\eta_j\le \frac{j\varepsilon+C}{2}\mbox{ for all }j\ge 1\Bigr\}
&\ge& 1-\gamma.
\end{eqnarray*}
Then, for all $x>e^C$,
\begin{eqnarray*}
\P\{\cup_{k=1}^{n-1}B_k^*\}
&\ge& (1-\gamma) \sum_{k=0}^{n-1}
\P\bigl\{\xi+\log\eta>\log x+C+k(a+\varepsilon)\bigr\}.
\end{eqnarray*}
Applying \eqref{log.eta.delta} we get
\begin{eqnarray*}
\P\{\cup_{k=1}^{n-1}B_k^*\}
&\ge& (1-\gamma) \sum_{k=0}^{n-1}
\P\bigl\{\xi+\log(1+\eta)>\log x+C+k(a+\varepsilon)+\log(1+1/\delta)\bigr\}\\
&\sim& (1-\gamma) \sum_{k=0}^{n-1}
\overline H(\log x+k(a+\varepsilon))\\
&\ge& \frac{1-\gamma}{a+\varepsilon}
\int_{\log x}^{\log x+n(a+\varepsilon)}\overline H(y)dy.
\end{eqnarray*}
Substituting this estimate and the asymptotics
for $D_n$ into \eqref{uni.B.M.ge} we deduce that
\begin{eqnarray*}
\lim_{x\to\infty}\inf_{n\ge 1}
\P\{\cup_{k=0}^n B_k\mid D_n>x\}
&\ge& \frac{(1-2\gamma)a}{a+\varepsilon}.
\end{eqnarray*}
Now we can make $\gamma>0$ as small as we please
by choosing a sufficiently large $C$. Therefore,
\begin{eqnarray*}
\lim_{C\to\infty}\lim_{x\to\infty}
\inf_{n\ge 1}\P\{\cup_{k=0}^{n-1} B_k\mid D_n>x\}
&\ge& \frac{a}{a+\varepsilon}.
\end{eqnarray*}
Here the probability on the left is decreasing as
$\varepsilon\downarrow 0$ while the ratio on the right
can be made as close to $1$ as we please by choosing
a sufficiently small $\varepsilon>0$.
This yields that the limit is equal to $1$ for every $\varepsilon>0$. 
\end{proof}




\section{Cram\'er's case}
\label{sec:Cramer}

In this section we consider light-tailed case where $\xi$
possesses some positive exponential moments finite;
the Cram\'er case is studied.
To obtain tail results for $D_\infty$ and $D_n$ in the Cram\'er case
we first recall the corresponding theorem for
asymptotically space-homogeneous Markov chain.
So, let $X_n$ be a Markov chain on $\R$ with jumps $\xi(x)$ 
which weakly converge to $\xi$ as $x\to\infty$; 
let the distribution $F$ of the random variable $\xi$ be non-lattice.
Let $\pi$ be the invariant distribution of $X_n$.

As above, the parameter $\beta>0$ is a positive solution
to the equation $\varphi(\beta)=\E e^{\beta\xi}=1$.
Then the measure $F^{(\beta)}$ defined by the equality
$$
F^{(\beta)}(du)=e^{\beta u}F(du),
$$
is probabilistic. Let $\xi^{(\beta)}$ be a random
variable with distribution $F^{(\beta)}$. Assume that
\begin{eqnarray*}
\alpha  &\equiv& \E\xi^{(\beta)}=\varphi'(\beta)\in (0,\infty).
\end{eqnarray*}

\begin{Theorem}[\cite{K2004}]\label{thm:m.c.cramer}
Let
\begin{eqnarray}\label{cond.3}
\int_{-\infty}^\infty e^{\beta y}|\P\{\xi(x)>y\}-\P\{\xi>y\}|dy
&\le& \delta(x),\quad x\in\R,
\end{eqnarray}
for some bounded decreasing integrable at infinity regularly varying 
function $\delta(x)$. Then
$$
\pi(x,\infty)\ =\ (c+o(1))e^{-\beta x}\quad\mbox{as }x\to\infty,
$$
where
\begin{eqnarray}\label{value.of.c}
c &=& \frac1{\beta\alpha}
\int_{-\infty}^\infty(\E e^{\beta\xi(y)}-1)e^{\beta y}\,\pi(dy)\in[0,\infty).
\end{eqnarray}
If $\E e^{\beta\xi(y)} \ge 1-\gamma(y)$ for some decreasing 
function $\gamma(y)=o(1/y)$ such that $y\gamma(y)$
is integrable at infinity, then $c>0$.

Let in addition $\E e^{\beta X_0}$ be finite,
$\E\xi^2 e^{\beta\xi}<\infty$ and let
the family of jumps $\{\xi(u), u\in\R\}$
possesses a stochastic majorant $\overline\xi$ such that
\begin{eqnarray}\label{maj}
\E \overline\xi^2e^{\beta\overline\xi} &<& \infty.
\end{eqnarray}
Assume also that the chain jumps satisfy the following conditions:
\begin{eqnarray}
\inf_{u\in\R}\E e^{\beta\xi(u)} &>& 0,\label{cond.1}\\
\E \xi(u)e^{\beta\xi(u)}
&=& \alpha+o(1/\sqrt u)\quad\mbox{as }u\to\infty.
\label{cond.2}
\end{eqnarray}
Then the following relation holds:
$$
\P\{X_n>x\}=ce^{-\beta x}
{\mathcal N}_{0,\sigma^2}\bigg(\frac{n\alpha-x}{\sqrt{x/\alpha}}\bigg)
+o(e^{-\beta x})
$$
as $x\to\infty$ uniformly in $n\ge0$, 
where ${\mathcal N}_{0,\sigma^2}$ is the normal cumulative
distribution function with zero expectation and variance 
\begin{eqnarray*}
\sigma^2 &\equiv& {\mathbb Var}\xi^{(\beta)}
= \varphi''(\beta)-(\varphi'(\beta))^2.
\end{eqnarray*}
\end{Theorem}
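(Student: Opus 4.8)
The plan is to transport the problem, via the Cram\'er exponential change of measure with parameter $\beta$, to classical renewal-theoretic and local-limit facts for a random walk with positive drift. Since $\varphi(\beta)=1$, the tilted law $F^{(\beta)}(du)=e^{\beta u}F(du)$ of the limiting jump is a probability measure with mean $\alpha=\varphi'(\beta)>0$ and variance $\sigma^2$; for each state $u$ put $m(u):=\E e^{\beta\xi(u)}$ and let $\widehat F_u$ be the $\beta$-tilt of the law of $\xi(u)$, normalised by $m(u)$. Condition \eqref{cond.3} says precisely that $\widehat F_u\to F^{(\beta)}$, and $m(u)\to\varphi(\beta)=1$, at a rate controlled by the integrable regularly varying $\delta$, so the tilted chain $\widehat X_n$ is again asymptotically space-homogeneous, now with a \emph{positive} limiting drift, hence transient to $+\infty$. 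The other hypotheses are exactly what is needed to carry out the tilted computation: \eqref{cond.1} keeps the tilting factors bounded away from $0$, \eqref{maj} together with $\E\xi^2e^{\beta\xi}<\infty$ yields a stochastic majorant with two finite $\beta$-tilted moments, \eqref{cond.2} pins the tilted drift at $\alpha$ up to $o(1/\sqrt u)$, and $\E e^{\beta X_0}<\infty$ makes the $\beta$-tilt of the initial law a proper distribution.

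First I would establish the invariant-tail asymptotics $\pi(x,\infty)=(c+o(1))e^{-\beta x}$. Rewriting the stationarity identity for the weighted measure $\nu(dy):=e^{\beta y}\pi(dy)$ gives $\nu(dy)=\int\nu(dx)\,m(x)\,\widehat F_x(dy-x)$, exhibiting $\nu$ as the occupation (Green) measure of the transient tilted chain subject to killing at rate $1-m(\cdot)$. Splitting $\R$ into a fixed half-line $[N,\infty)$ --- on which $\widehat F_x$ is uniformly close to $F^{(\beta)}$ --- and its complement, I would derive a renewal equation governing $\nu$ locally on $[N,\infty)$, whose forcing term collects the mass entering $[N,\infty)$ from below together with the jump discrepancies $|\widehat F_x-F^{(\beta)}|$ and the killing defect $1-m$; the regularly varying integrability of $\delta$ makes this forcing directly Riemann integrable, so the key renewal theorem fixes the limiting local intensity of $\nu$, and integrating $e^{-\beta y}$ against it yields $\pi(x,\infty)=(c+o(1))e^{-\beta x}$ with $c$ as in \eqref{value.of.c}. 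For $c>0$ one checks that the cumulative killing $\int(1-m(y))\,\nu(dy)\le\int\gamma(y)e^{\beta y}\pi(dy)$ is finite --- this is exactly where $\gamma(y)=o(1/y)$ with $y\gamma(y)$ integrable enters --- so the tilted chain survives with probability bounded away from $0$, its Green measure does not degenerate, and the limiting intensity of $\nu$ is strictly positive.

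For the uniform-in-$n$ statement I would bootstrap from the invariant tail via the first-passage CLT for the tilted walk. Undoing the tilt along paths of length $n$,
$$
\P\{X_n>x\}=e^{-\beta x}\,\E^{(\beta)}\bigl[\Lambda_n\,e^{-\beta(\widehat X_n-x)}\,\I\{\widehat X_n>x\}\bigr],
$$
where $\Lambda_n:=e^{\beta\widehat X_0}\prod_{k=1}^{n}m(\widehat X_{k-1})$, and \eqref{cond.1}, \eqref{maj}, $\E e^{\beta X_0}<\infty$ make $\{\Lambda_n\}$ a uniformly integrable, a.s.\ convergent multiplicative functional. Under $\P^{(\beta)}$ the chain has drift $\alpha>0$, so for the relevant range $n\asymp x/\alpha$ the event $\{\widehat X_n>x\}$ is dominated by trajectories that climb to level $x$ essentially along a line of slope $\alpha$, reaching it at a first-passage time $\tau_x$ with $\tau_x=x/\alpha+\sigma\alpha^{-3/2}\sqrt x\,Z+o_{\P}(\sqrt x)$, $Z$ standard normal; here \eqref{cond.2} guarantees that the genuine, space-inhomogeneous drift deviations accumulated over the $\sim x/\alpha$ climbing steps move the CLT centering by only $o(\sqrt x)$, while the second-moment hypotheses furnish a Berry--Esseen remainder. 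Decomposing $\P\{X_n>x\}$ according to the last visit before time $n$ to a fixed compact set --- equivalently, the starting epoch of the single large excursion that matters --- replaces the tilted expectation by a sum, over that epoch $s\in\{0,\dots,n-1\}$, of a first-passage density at $n-s$, which telescopes to the first-passage distribution function; this gives $\P\{X_n>x\}=c\,e^{-\beta x}\,\P\{\tau_x\le n\}+o(e^{-\beta x})$, where the constant $c$ of \eqref{value.of.c} reappears because the overshoot above $x$, the boundary weight $e^{-\beta(\widehat X_n-x)}$ and the limit $\Lambda_\infty$ jointly average to exactly the quantity governing the stationary tail (and $c$ can be pinned by letting $n\to\infty$ and matching $\pi(x,\infty)$). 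Since $\P\{\tau_x\le n\}\to{\mathcal N}_{0,\sigma^2}\bigl((n\alpha-x)/\sqrt{x/\alpha}\bigr)$, the asserted formula follows. Uniformity is handled by considering separately $n\ll x/\alpha$ (the walk cannot reach $x$, and both sides are $o(e^{-\beta x})$ since ${\mathcal N}\to0$), $n\gg x/\alpha$ (the stationary regime of the first part, ${\mathcal N}\to1$), and the transition window of width $\asymp\sqrt x$, where the Berry--Esseen bound supplies the Gaussian approximation with a uniform error, and $\E e^{\beta X_0}<\infty$ lets one discard the initial transient as $o(e^{-\beta x})$ uniformly in $n$.

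I expect the main obstacle to be exactly this uniform control across the transition window: one must run the first-passage CLT for a chain that is only asymptotically homogeneous and carries a killing defect, keeping every error $o(1)$ after normalisation by $e^{-\beta x}$, and at the same time show that the overshoot distribution above level $x$ stabilises uniformly in $n$, so that the prefactor is the same $c$ for all $n$; gluing the three $n$-ranges so the bound is genuinely uniform, rather than merely uniform on each $x$-dependent piece, is the delicate point. Because the transition kernels need not converge in total variation, classical random-walk results cannot simply be quoted, and the $\delta(\cdot)$- and $\gamma(\cdot)$-bounds have to be propagated through all the estimates.
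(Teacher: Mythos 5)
The paper does not prove this theorem: it is imported verbatim from \cite{K2004} (``One-dimensional asymptotically homogeneous Markov chains: Cram\'er transform and large deviation probabilities''), and no argument for it appears anywhere in the present text. There is therefore no ``paper's own proof'' to compare your attempt against. What can be said is that your route---transporting via the exponential change of measure $F^{(\beta)}(du)=e^{\beta u}F(du)$, regarding the tilted chain as asymptotically homogeneous with positive drift $\alpha$ plus a killing defect $1-m(\cdot)$, deriving the stationary-tail asymptotics from a renewal equation for $\nu(dy)=e^{\beta y}\pi(dy)$ via the key renewal theorem, and then obtaining the uniform-in-$n$ statement from a first-passage CLT for the tilted walk---is exactly the Cram\'er-transform programme the cited title advertises, and the change-of-measure identity you write for $\P\{X_n>x\}$ is dimensionally and formally correct. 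So the strategy is almost certainly the intended one, and your accounting of which hypothesis buys which estimate (\eqref{cond.3} for the renewal forcing, $\gamma$ for non-degeneracy of $c$, \eqref{cond.2} for the $o(1/\sqrt u)$ drift centering, \eqref{maj} and $\E e^{\beta X_0}<\infty$ for uniform integrability of $\Lambda_n$) is coherent.

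That said, what you have produced is a roadmap rather than a proof: the two genuinely hard steps---showing the forcing term of your renewal equation is directly Riemann integrable from the bare hypothesis that $\delta$ is decreasing, integrable, and regularly varying, and running the Berry--Esseen-type first-passage CLT for a chain whose kernels converge only weakly (not in total variation) while keeping every error $o(e^{-\beta x})$ uniformly over the three $n$-regimes you describe---are each several pages of work in \cite{K2004} and are here compressed into single sentences. You flag precisely these as the ``main obstacle,'' which is honest and correct, but it means the proposal does not yet constitute a verification of the theorem; it is a credible plan that would have to be checked against the cited source to be accepted.
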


The last theorem gives a new way for proving the power tail 
asymptotics for the perpetuities $D_\infty$ and 
$\widetilde D_\infty$ in the Cram\'er case. 
We assume a non-lattice distribution of $\xi$.

\begin{Theorem}\label{thm:Cramer}
Suppose that $\E e^{\beta\xi}=1$ and $\alpha:=\E \xi e^{\beta\xi}<\infty$. 
If $\E e^{\beta\xi}|\eta|^\beta<\infty$ then, for some $c>0$,
\begin{eqnarray}\label{D.infty.beta}
\P\{D_\infty>x\} &\sim& \frac{c}{x^\beta}\quad\mbox{ as }x\to\infty.
\end{eqnarray}

The same result holds for $\widetilde D_\infty$ if we assume 
finiteness of $\E |\eta|^\beta$ instead of $\E e^{\beta\xi}|\eta|^\beta$.
\end{Theorem}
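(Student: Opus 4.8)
The plan is to read the tail of $D_\infty$ off the invariant measure of the associated Markov chain and then to apply Theorem~\ref{thm:m.c.cramer}. By \eqref{D.via.X}, $\P\{D_\infty>x\}=\pi(\log x,\infty)$ for $x>e$, where $\pi$ is the invariant distribution of the chain $X_n$ whose jump from level $y$ is $\xi(y)$; by \eqref{jump.ge.0} one has $\xi(y)=\xi+\log(1+\eta e^{-y})$ as soon as $e^\xi(\eta+e^y)>e$, an event whose complement has probability $\to0$ as $y\to\infty$. The law of $\xi$ is non-lattice by assumption, $\varphi(\beta)=1$, and $\alpha=\varphi'(\beta)\in(0,\infty)$, positivity of $\alpha$ following from the (strict) convexity of $\varphi$ and $\varphi(0)=\varphi(\beta)=1$. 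So it remains to verify the convergence condition \eqref{cond.3} with an admissible rate $\delta$, and the one-sided lower bound on $\E e^{\beta\xi(y)}$ that gives $c>0$ in \eqref{value.of.c}.

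For \eqref{cond.3} I would start from the elementary inequality
\[
\int_{-\infty}^\infty e^{\beta t}\,\bigl|\P\{U>t\}-\P\{V>t\}\bigr|\,dt
\ \le\ \frac1\beta\,\E\bigl(e^{\beta(U\vee V)}-e^{\beta(U\wedge V)}\bigr),
\]
valid for any $U,V$ on a common space since $|\I\{U>t\}-\I\{V>t\}|=\I\{U\wedge V\le t<U\vee V\}$. Taking $U=\xi(y)$ and $V=\xi$ coupled through the same pair $(\xi,\eta)$, on the event where $\xi(y)=\xi+\log(1+\eta e^{-y})$ the right-hand side equals $\tfrac1\beta\E\bigl[e^{\beta\xi}\,|(1+\eta e^{-y})^\beta-1|\bigr]$, while the complementary event contributes at most a bounded exponential factor times $\P\{e^\xi(\eta+e^y)\le e\}$, which is small for large $y$. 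Using standard elementary estimates on $|(1+u)^\beta-1|$ (of order $|u|$ for small $|u|$ and of order $|u|^\beta$ for large $|u|$) together with $\eta e^{-y}\to0$ a.s., dominated convergence with integrable dominant $e^{\beta\xi}(1+|\eta|^\beta)$ — finite precisely because $\E e^{\beta\xi}=1$ and $\E e^{\beta\xi}|\eta|^\beta<\infty$ — gives that this expectation is $O(e^{-\mu y})$ as $y\to\infty$ with $\mu=\min(1,\beta)>0$, which is dominated by a bounded, non-increasing, integrable, regularly varying $\delta$. Theorem~\ref{thm:m.c.cramer} then yields $\pi(\log x,\infty)=(c+o(1))x^{-\beta}$.

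To get $c>0$ it suffices, by the second part of Theorem~\ref{thm:m.c.cramer}, that $\E e^{\beta\xi(y)}\ge1-\gamma(y)$ with $\gamma$ decreasing, $\gamma(y)=o(1/y)$ and $y\gamma(y)$ integrable at infinity. Since $\E e^{\beta\xi}=1$, one has $\E e^{\beta\xi(y)}-1=\E\bigl[e^{\beta\xi}((1+\eta e^{-y})^\beta-1)\bigr]$ on the good event plus a negligible correction, and the same estimate as above gives $|\E e^{\beta\xi(y)}-1|=O(e^{-\mu y})$, so $\gamma(y)=O(e^{-\mu y})$ works; when $\eta\ge0$ and $\beta\le1$ this is immediate since then $(1+\eta e^{-y})^\beta\ge1$. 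This proves \eqref{D.infty.beta}. The statement for $\widetilde D_\infty$ then follows from the substitution recalled in the introduction: $\widetilde D_n$ is $D_n$ built from the reference vector $(\xi,\eta e^{-\xi})$, so the result already proven applies with $\eta$ replaced by $\eta e^{-\xi}$, the moment hypothesis $\E e^{\beta\xi}|\eta e^{-\xi}|^\beta<\infty$ reducing to $\E|\eta|^\beta<\infty$ while $\beta$, $\alpha$ and the non-lattice hypothesis depend only on the law of $\xi$ and are unchanged.

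The step I expect to be the main obstacle is making the verification of \eqref{cond.3} genuinely uniform over \emph{all} $y\in\R$, since that theorem literally requires the bound on $(-\infty,\infty)$ while the estimate above only controls $y\to+\infty$. For very negative $y$ (possible when $\eta$ takes negative values) the jump $\xi(y)$ no longer behaves like $\xi$ but, through the reflecting/linear piece of $f$, like $-\xi$ shifted by $-y$, so a direct application is delicate; the natural remedy is to localize to a half-line $[x_0,\infty)$ and dominate/compare as in the proof of Theorem~\ref{thm:subexp}, reducing the analysis to the regime where \eqref{cond.3} does hold with a bounded $\delta$. A secondary technical point is that for $\beta<1$ the hypothesis controls only $\E e^{\beta\xi}|\eta|^\beta$ and not $\E e^{\beta\xi}|\eta|$, so in the coupling estimate one must split according to whether $|\eta|e^{-y}$ exceeds $1$.
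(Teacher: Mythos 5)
Your proposal follows essentially the same route as the paper: read $\P\{D_\infty>x\}=\pi(\log x,\infty)$ off the associated Markov chain, verify the convergence condition \eqref{cond.3} (with the explicit rate $O(e^{-\min(1,\beta)x})$, which is the same rate the paper obtains), apply Theorem~\ref{thm:m.c.cramer} to get the tail equivalence and the positivity of $c$, and pass to $\widetilde D_\infty$ via the reference-vector substitution $(\xi,\eta)\mapsto(\xi,\eta e^{-\xi})$ noted in the introduction. The one genuine difference is technical but pleasant: you bound $\int e^{\beta t}\bigl|\P\{\xi(y)>t\}-\P\{\xi>t\}\bigr|\,dt$ in one stroke by the coupling identity $\le \beta^{-1}\E\bigl(e^{\beta(\xi(y)\vee\xi)}-e^{\beta(\xi(y)\wedge\xi)}\bigr)=\beta^{-1}\E\bigl[e^{\beta\xi}\,|(1+\eta e^{-y})^\beta-1|\bigr]$ on the good event, whereas the paper splits the integral by the sign of $\eta$, uses $\xi(x)\ge\xi$ on $\{\eta>0\}$ and $\xi(x)\le\max(\xi,1-x)$ on $\{\eta\le 0\}$, and estimates each piece with the one-sided inequalities $(1+z)^\beta-1\le z^\beta$ (resp.\ $\le c_1(z+z^\beta)$) and $(1-z)^\beta\ge 1-z^\beta$ (resp.\ $\ge 1-\beta z$). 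Your version is the more compact packaging of the same elementary estimates.

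Your worry about uniformity in \eqref{cond.3} over \emph{all} $y\in\R$ is a fair observation, but it is not a defect peculiar to your argument: the paper's own verification also works only for $x>1$ and asserts that the $O(e^{-\delta x})$ bound as $x\to\infty$ is ``sufficient,'' leaving the bounded range of $x$ implicit. Concretely, for $y\to-\infty$ one computes $\xi(y)=-\xi-\log(1-\eta e^{y})\to-\xi$ (no shift by $-y$, contrary to your rough description), so the relevant issue is whether $\E e^{\beta\xi(y)}$ stays bounded, i.e.\ effectively whether $\E e^{-\beta\xi}<\infty$; this is automatic when $\eta>0$ (then the chain never leaves $(0,\infty)$ and only $y>0$ matters) and otherwise hinges on how \cite{K2004} actually phrases the hypothesis. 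Your suggested remedy of localising to a half-line is exactly the right instinct, and it matches what the paper does in practice, even though the paper does not spell it out. Apart from your slightly imprecise description of $\xi(y)$ for $y\to-\infty$, the argument is correct and is the paper's argument modulo the cleaner coupling bound.
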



Notice that the tail asymptotics for $\widetilde D_\infty$
is due to Kesten \cite[Theorem 5]{Kesten1973};
for a complete proof see Goldie \cite[Theorem 4.1]{Goldie1991}.

\begin{proof}
By Theorem \ref{thm:m.c.cramer}, it is sufficient to check 
that the jumps $\xi(x)$ of the associated Markov chain $X_n$
defined in \eqref{jump.ge.0} satisfy 
\begin{eqnarray}\label{cond.3.ver}
\int_{-\infty}^\infty e^{\beta y}|\P\{\xi(x)>y\}-\P\{\xi>y\}|dy
&=& O(e^{-\delta x})\ \mbox{ as }x\to\infty,
\end{eqnarray}
where $\delta=\min(1,\beta)$. Consider $x>1$. 
Since $f(y)\ge\log y$ for all $y>0$,
\begin{eqnarray}\label{xi.x.below}
\xi(x) &=& f(e^\xi(e^x+\eta))-x\nonumber\\ 
&\ge& \log(e^\xi(e^x+\eta))-x\ =\ \xi+\log(1+\eta e^{-x})
\end{eqnarray}
on the event $\eta>-e^x$. Then $\xi(x)\ge\xi$ on the event $\eta>0$.
On the event $\eta\le 0$,
\begin{eqnarray*}
\xi(x)\ \le\ f(e^{x+\xi})-x &\le& \max(\xi,1-x).
\end{eqnarray*}
Therefore, for $x>1$,
\begin{eqnarray*}
|\P\{\xi(x)>y\}-\P\{\xi>y\}| 
&\le& \P\{\xi(x)>y, \eta>0\}-\P\{\xi>y, \eta>0\}\\
&&\hspace{4mm} +\P\{\xi>y, \eta\le 0\}-\P\{\xi(x)>y, \eta\le 0\}\\
&&\hspace{10mm} +\I\{y\le 1-x\},
\end{eqnarray*}
and hence the integral in \eqref{cond.3.ver} may be bounded by the sum
\begin{eqnarray}\label{decomp.3}
\lefteqn{\hspace{-10mm}\int_{-\infty}^{1-x} e^{\beta y}dy+
\int_{-\infty}^\infty e^{\beta y}
\bigl(\P\{\xi(x)>y, \eta>0\}-\P\{\xi>y, \eta>0\}\bigr)dy}\nonumber\\
&&\hspace{10mm}+\int_{-\infty}^\infty e^{\beta y}
\bigl(\P\{\xi>y, \eta\le 0\}-\P\{\xi(x)>y, \eta\le 0\}\bigr)dy.
\end{eqnarray}
The first integral here is of order $O(e^{-\beta x})$.
The second integral equals
\begin{eqnarray*}
\lefteqn{\beta^{-1}\bigl(\E\{e^{\beta\xi(x)};\ \eta>0\}
-\E\{e^{\beta\xi};\ \eta>0\}\bigr)}\\
&=& \beta^{-1}\bigl(\E\{e^{\beta\xi(x)};\ \eta>0,\xi(x)>1-x\}
-\E\{e^{\beta\xi};\ \eta>0\}\bigr)+O(e^{-\beta x})\\
&=& \beta^{-1}\bigl(\E\{e^{\beta\xi}(1+\eta e^{-x})^\beta;\ \eta>0,\xi(x)>1-x\}
-\E\{e^{\beta\xi};\ \eta>0\}\bigr)+O(e^{-\beta x}).
\end{eqnarray*}
Thus, the second integral is not greater than
\begin{eqnarray*}
\lefteqn{\beta^{-1}\bigl(\E\{e^{\beta\xi}(1+\eta e^{-x})^\beta;\ \eta>0\}
-\E\{e^{\beta\xi};\ \eta>0\}\bigr)+O(e^{-\beta x})}\\
&&\hspace{50mm} =\ \beta^{-1}\E e^{\beta\xi}\bigl((1+\eta^+ e^{-x})^\beta-1\bigr)
+O(e^{-\beta x}).
\end{eqnarray*}
If $\beta\le 1$, then $(1+z)^\beta-1\le z^\beta$ for $z\ge 0$ and hence
\begin{eqnarray*}
\E e^{\beta\xi}\bigl((1+\eta^+ e^{-x})^\beta-1\bigr)
&\le& e^{-\beta x}\E (e^\xi\eta^+)^\beta
\ =\ O(e^{-\beta x})\quad\mbox{as }x\to\infty,
\end{eqnarray*}
since $\E (e^\xi \eta^+)^\beta$ is finite. If $\beta>1$, 
then there is a $c_1$ such that
\begin{eqnarray}\label{z.beta}
(1+z)^\beta-1 &\le& \beta(1+z)^{\beta-1}z\ \le\ c_1(z+z^\beta)
\quad\mbox{for }z\ge 0,
\end{eqnarray}
so 
\begin{eqnarray*}
\E e^{\beta\xi}\bigl((1+\eta^+ e^{-x})^\beta-1\bigr)
&\le& c_1\E e^{\beta\xi}
(\eta^+ e^{-x}+(\eta^+)^\beta e^{-\beta x})
\ =\ O(e^{-x}).
\end{eqnarray*}
Altogether implies that the second integral in
\eqref{decomp.3} is of order $O(e^{-\delta x})$ as $x\to\infty$.

Further, the third integral in \eqref{decomp.3} equals
\begin{eqnarray*}
\beta^{-1}\bigl(\E\{e^{\beta\xi};\ \eta\le 0\}
-\E\{e^{\beta\xi(x)};\ \eta\le 0\}\bigr).
\end{eqnarray*}
In its turn, the difference of expectations is not greater than
\begin{eqnarray*}
\lefteqn{\E\{e^{\beta\xi};\ \eta\le 0\}
-\E\{e^{\beta\xi(x)};\ -e^x<\eta\le 0\}}\\
&\le& \E\{e^{\beta\xi};\ \eta\le 0\}
-\E\{e^{\beta\xi}(1+\eta e^{-x})^\beta;\ -e^x<\eta\le 0\},
\end{eqnarray*}
due to \eqref{xi.x.below}.
If $\beta\le 1$ then $(1-z)^\beta\ge 1-z^\beta$ for all $z\in[0,1]$, so
\begin{eqnarray*}
\E\{e^{\beta\xi}(1+\eta e^{-x})^\beta;\ -e^x<\eta\le 0\}
&\ge& \E\{e^{\beta\xi}(1-|\eta|^\beta e^{-\beta x});\ -e^x<\eta\le 0\}\\
&\ge& \E\{e^{\beta\xi}(1-|\eta|^\beta e^{-\beta x});\ \eta\le 0\}.
\end{eqnarray*}
Therefore,
\begin{eqnarray*}
\E\{e^{\beta\xi};\ \eta\le 0\}
-\E\{e^{\beta\xi(x)};\ -e^x<\eta\le 0\}
&\le& e^{-\beta x}\E\{e^{\beta\xi}|\eta|^\beta;\ \eta\le 0\}\\
&=& O(e^{-\beta x}).
\end{eqnarray*}
If $\beta>1$ then $(1-z)^\beta\ge 1-\beta z$ for all $z\in[0,1]$, so
\begin{eqnarray*}
\E\{e^{\beta\xi}(1+\eta e^{-x})^\beta;\ -e^x<\eta\le 0\}
&\ge& \E\{e^{\beta\xi}(1-\beta|\eta| e^{-x});\ -e^x<\eta\le 0\}\\
&\ge& \E\{e^{\beta\xi}(1-\beta |\eta| e^{-x});\ \eta\le 0\}.
\end{eqnarray*}
Thus,
\begin{eqnarray*}
\E\{e^{\beta\xi};\ \eta\le 0\}
-\E\{e^{\beta\xi(x)};\ -e^x<\eta\le 0\}
&\le& \beta e^{-x}\E\{e^{\beta\xi}|\eta|;\ \eta\le 0\}\\
&=& O(e^{-x}),
\end{eqnarray*}
because both $e^{\beta\xi}$ and $e^{\beta\xi}|\eta|^\beta$
have finite expectations. Hence the third integral in
\eqref{decomp.3} is of order $O(e^{-\delta x})$ as $x\to\infty$.

Above bounds for integrals in \eqref{decomp.3} 
yield \eqref{cond.3.ver} and hence \eqref{D.infty.beta}.
In particular, $\E e^{\beta\xi(x)}\ge 1-O(e^{-\delta x})$ 
which implies $c>0$ by Theorem \ref{thm:m.c.cramer}.
\end{proof}

Now let us show how Theorem \ref{thm:m.c.cramer} allows 
to identify asymptotic tail behaviour of $D_n$.
Unfortunately it only works in the case of positive $\eta$, so $D_n>0$,
because the associated Markov chain $X_n$ does not satisfy
the condition \eqref{maj} if $X_n$ takes values of both signs;
in order to solve the case of general $\eta$ it is necessary 
to improve Theorem \ref{thm:m.c.cramer} but we do not do it in this paper.

\begin{Theorem}\label{thm:Cramer.n}
Let conditions of Theorem \ref{thm:Cramer} hold and $\eta>0$. 
If, in addition, $\sigma^2:=\E \xi^2e^{\beta\xi}-\alpha^2<\infty$,
\begin{eqnarray}\label{maj.1.0}
\E |\xi|e^{\beta\xi}\eta^\beta &<& \infty
\end{eqnarray}
and
\begin{eqnarray}\label{maj.1.2}
\E (\xi^++\log(1+\eta))^2 e^{\beta\xi^+}(1+\eta)^\beta &<& \infty,
\end{eqnarray}
then
$$
\P\{D_n>x\}=\frac{c}{x^\beta}
{\mathcal N}_{0,\sigma^2}\bigg(\frac{n\alpha-\log x}
{\sqrt{\alpha^{-1}\log x}}\bigg)
+o(1/x^\beta)
$$
as $x\to\infty$ uniformly in $n\ge0$.
In particular, for $n>\alpha^{-1}\log x+U(x)\sqrt{\log x}$
where $U(x)\to\infty$,
$$
\P\{D_n>x\}\sim\P\{D_\infty>x\}\quad\mbox{as }x\to\infty.
$$

The same results hold for $\widetilde D_n$ if we replace 
the conditions \eqref{maj.1.0} and \eqref{maj.1.2} by integrability 
of $|\xi|\eta^\beta$ and $\eta^\beta\log^2(1+\eta)$.
\end{Theorem}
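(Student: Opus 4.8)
The plan is to reduce Theorem \ref{thm:Cramer.n} to an application of the finite-time part of Theorem \ref{thm:m.c.cramer}, exactly as the proof of Theorem \ref{thm:Cramer} reduced the $D_\infty$ statement to the stationary part of that same theorem. Since $\eta>0$ we work with the logarithmic version of the chain, so the jumps are
$$
\xi(x)=\xi+\log(1+\eta e^{-x}),\qquad x\in\R,
$$
and the tail translation \eqref{D.via.X} gives $\P\{D_n>x\}=\P\{X_n>\log x\}$ for $x>e$. Thus it suffices to verify, for this concrete field $\xi(\cdot)$, all the hypotheses in the second (uniform-in-$n$) half of Theorem \ref{thm:m.c.cramer}: condition \eqref{cond.3} (already established in the proof of Theorem \ref{thm:Cramer}, which also gives $c>0$), finiteness of $\E\xi^2e^{\beta\xi}$ (this is the assumed $\sigma^2<\infty$), existence of a stochastic majorant $\overline\xi$ with $\E\overline\xi^2 e^{\beta\overline\xi}<\infty$ (condition \eqref{maj}), the lower bound \eqref{cond.1}, and the refined drift condition \eqref{cond.2}. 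Finiteness of $\E e^{\beta X_0}$ holds because $X_0=f(\eta_1e^{\xi_1})\le \xi_1+\log(1+\eta_1)$ and $\E e^{\beta(\xi+\log(1+\eta))}=\E e^{\beta\xi}(1+\eta)^\beta<\infty$ by \eqref{maj.1.2}. With all hypotheses in place, Theorem \ref{thm:m.c.cramer} delivers
$$
\P\{X_n>\log x\}=ce^{-\beta\log x}\,{\mathcal N}_{0,\sigma^2}\!\Bigl(\frac{n\alpha-\log x}{\sqrt{\log x/\alpha}}\Bigr)+o(e^{-\beta\log x}),
$$
which is the claimed formula; the concluding statement about $n>\alpha^{-1}\log x+U(x)\sqrt{\log x}$ follows since then the normal argument tends to $+\infty$, so ${\mathcal N}_{0,\sigma^2}(\cdot)\to1$, and we recover $\P\{D_\infty>x\}\sim c/x^\beta$ from Theorem \ref{thm:Cramer}.

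\textbf{The main work} is the construction of the majorant and the verification of \eqref{maj} and \eqref{cond.2}; this is where the moment hypotheses \eqref{maj.1.0} and \eqref{maj.1.2} enter. For the majorant, the natural choice is $\overline\xi:=\xi^++\log(1+\eta)$, since $\log(1+\eta e^{-x})\le\log(1+\eta)$ for all $x\ge0$ and $\xi\le\xi^+$, so $\xi(x)\le_{st}\overline\xi$ uniformly in $x\in\R$ (for $x<0$ one bounds $\log(1+\eta e^{-x})\le -x+\log(1+\eta)$, which still sits below a fixed integrable-enough random variable after the standard shift — but in fact the interesting range is $x\to\infty$ and Theorem \ref{thm:m.c.cramer} only needs the majorant on $u\in\R$ with the square-exponential moment, which \eqref{maj.1.2} provides directly as $\E\overline\xi^2 e^{\beta\overline\xi}<\infty$). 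Condition \eqref{cond.1} is easy: $\E e^{\beta\xi(u)}\ge\E e^{\beta\xi}=1$ because $\eta>0$ forces $\xi(u)\ge\xi$ pointwise. Thus the genuinely delicate point is the rate in \eqref{cond.2}: I must show
$$
\E\xi(u)e^{\beta\xi(u)}=\E\bigl(\xi+\log(1+\eta e^{-u})\bigr)e^{\beta\xi}(1+\eta e^{-u})^\beta=\alpha+o(1/\sqrt u)\quad\text{as }u\to\infty.
$$
Expanding $(1+\eta e^{-u})^\beta=1+O(\eta e^{-u}+\eta^\beta e^{-\beta u})$ and $\log(1+\eta e^{-u})=O(\eta e^{-u})$, the difference from $\alpha=\E\xi e^{\beta\xi}$ is controlled by terms of the form $e^{-\min(1,\beta)u}$ times expectations like $\E|\xi|e^{\beta\xi}\eta^\beta$, $\E e^{\beta\xi}\eta^\beta$, $\E e^{\beta\xi}\eta$ — all finite by \eqref{maj.1.0} together with the hypotheses already assumed in Theorem \ref{thm:Cramer}. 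Since $e^{-\min(1,\beta)u}=o(1/\sqrt u)$, \eqref{cond.2} follows; in fact one gets the much stronger $O(e^{-\delta u})$, consistent with \eqref{cond.3.ver}.

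\textbf{The expected obstacle} is bookkeeping the moment conditions cleanly — in particular isolating exactly which of \eqref{maj.1.0}–\eqref{maj.1.2} is needed for which of \eqref{maj}, \eqref{cond.2}, and the finiteness of $\E e^{\beta X_0}$ and $\E\xi^2 e^{\beta\xi}$ — and handling the case split $\beta\le1$ versus $\beta>1$ in the binomial-type estimates $(1+z)^\beta-1\le z^\beta$ (resp. $\le c_1(z+z^\beta)$), just as in the proof of Theorem \ref{thm:Cramer}. For $\widetilde D_n$ one repeats everything with the field $\xi(x)=\log(e^\xi+\eta)-x$, eventually equal to $\max(\xi,\log\eta-x)$-type behaviour after passing through $f$; the majorant becomes $\max(\xi^+,\log(1+\eta))\le \xi^++\log(1+\eta)$ and the relevant moments reduce to $\E|\xi|\eta^\beta<\infty$ and $\E\eta^\beta\log^2(1+\eta)<\infty$, exactly the stated replacements, because in the $\widetilde D$ construction the weight $e^{\beta\xi}$ attached to $\eta^\beta$ in \eqref{maj.1.0}–\eqref{maj.1.2} disappears — this mirrors the analogous simplification already noted after Theorem \ref{thm:Cramer}. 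No new idea is needed beyond the proof of Theorem \ref{thm:Cramer}; the content is verifying the two extra second-moment conditions of Theorem \ref{thm:m.c.cramer}.
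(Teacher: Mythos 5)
The overall reduction to Theorem \ref{thm:m.c.cramer}, the choice of majorant $\overline\xi=\xi^++\log(1+\eta)$, and the verifications of conditions \eqref{cond.1}, \eqref{maj}, \eqref{cond.3} and of $\E e^{\beta X_0}<\infty$ all match the paper's approach. However, your verification of the drift condition \eqref{cond.2} contains a genuine gap. You claim that the naive Taylor estimates $\log(1+\eta e^{-u})=O(\eta e^{-u})$ and $(1+\eta e^{-u})^\beta=1+O(\eta e^{-u}+\eta^\beta e^{-\beta u})$ deliver the rate $O(e^{-\delta u})$ for $|\E\xi(u)e^{\beta\xi(u)}-\alpha|$, and you list the required expectations as including $\E e^{\beta\xi}\eta$. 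That quantity is not assumed finite. More precisely, the cross term
\begin{equation*}
\E\,\log(1+\eta e^{-u})\,e^{\beta\xi}\,(1+\eta e^{-u})^\beta
\end{equation*}
would, under your bound $\log(1+\eta e^{-u})\le\eta e^{-u}$ and $(1+\eta e^{-u})^\beta\le(1+\eta)^\beta$, require $\E\,\eta\, e^{\beta\xi}(1+\eta)^\beta<\infty$ to conclude an $O(e^{-u})$ rate. When $\beta<1$ the integrand behaves like $\eta^{1+\beta}e^{\beta\xi}$ for large $\eta$, which is strictly stronger than any of the assumed moments $\E e^{\beta\xi}\eta^\beta$, $\E|\xi|e^{\beta\xi}\eta^\beta$, or $\E\,\overline\xi^{\,2}e^{\beta\overline\xi}$ (the last gives only an extra $\log^2$ factor, not an extra power of $\eta$). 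So your claimed exponential rate does not follow from the hypotheses.

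The paper's actual argument at \eqref{est.rc.2} avoids this by splitting on $\eta\le e^{x/2}$ versus $\eta>e^{x/2}$: on the first event $\log(1+\eta e^{-x})\le\log(1+e^{-x/2})=O(e^{-x/2})$ and $(1+\eta e^{-x})^\beta\le(1+\eta)^\beta$ gives $O(e^{-x/2})$; on the second event one writes $\log(1+\eta)\le\log^2(1+\eta)/\log(1+e^{x/2})$ and uses \eqref{maj.1.2} to get $o(1/x)$. The final rate for \eqref{cond.2.ver} is only $o(1/x)$, not $O(e^{-\delta x})$, but that is still stronger than the required $o(1/\sqrt{x})$. You should replace your expansion argument with this splitting; without it the verification of \eqref{cond.2} is not established under the stated moment conditions. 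The rest of your proposal, including the treatment of $\widetilde D_n$, is consistent with the paper modulo this same point.
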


\begin{proof}
The associated Markov chain $X_n$ satisfies all the conditions 
of Theorem \ref{thm:m.c.cramer}. Indeed, the condition \eqref{cond.1} 
is fulfilled because $\eta>0$.
The condition \eqref{maj} is satisfied with 
$\overline\xi=\xi^++\log(1+\eta)$, by \eqref{maj.1.2}.

Let us now prove an equivalent version of the condition \eqref{cond.2},
\begin{eqnarray}\label{cond.2.ver}
\E \xi(x)e^{\beta\xi(x)} &=& \E \xi e^{\beta\xi}+o(1/\sqrt x)
\quad\mbox{ as }x\to\infty.
\end{eqnarray}
Indeed, owing to $D_n>0$ and the definition of $\xi(x)$ we have 
\begin{eqnarray*}
\E \xi(x)e^{\beta\xi(x)} &=& \E \bigl[\log(e^\xi(e^x+\eta))-x\bigr]
e^{\beta[\log(e^\xi(e^x+\eta))-x]}+O(xe^{-\beta x})\\
&=& \E(\xi+\log(1+\eta e^{-x}))e^{\beta\xi}(1+\eta e^{-x})^\beta+O(xe^{-\beta x}).
\end{eqnarray*}
Therefore, due to $\eta>0$,
\begin{eqnarray}\label{est.rc}
\lefteqn{|\E \xi(x)e^{\beta\xi(x)}-\E \xi e^{\beta\xi}|}\nonumber\\
&=& \bigl|\E(\xi+\log(1+\eta e^{-x}))e^{\beta\xi}(1+\eta e^{-x})^\beta
-\E \xi e^{\beta\xi}\bigr|+O(xe^{-\beta x})\nonumber\\
&\le& \E |\xi|e^{\beta\xi}[(1+\eta e^{-x})^\beta-1]
+\E \log(1+\eta e^{-x})e^{\beta\xi}(1+\eta e^{-x})^\beta+O(xe^{-\beta x}).\nonumber\\[-1mm]
\end{eqnarray}
Condition \eqref{maj.1.0} allows us to repeat calculations 
used in the proof of Theorem \ref{thm:Cramer} and to show that
\begin{eqnarray}\label{est.rc.1}
\E |\xi|e^{\beta\xi}[(1+\eta e^{-x})^\beta-1]
&=& O(e^{-\delta x})\quad\mbox{ as }x\to\infty,
\end{eqnarray}
where again $\delta:=\min(\beta,1)>0$.
The second term on the right side of \eqref{est.rc} 
may be estimated from above as follows: 
\begin{eqnarray}\label{est.rc.2}
\lefteqn{\E \log(1+\eta e^{-x})e^{\beta\xi}(1+\eta e^{-x})^\beta}\nonumber\\
&\le& \E \{\log(1+\eta e^{-x})e^{\beta\xi}(1+\eta)^\beta;
\eta\le e^{x/2}\}\nonumber\\
&&\hspace{40mm} +\E \{\log(1+\eta)e^{\beta\xi}(1+\eta)^\beta;
\eta>e^{x/2}\}\nonumber\\
&\le& O(e^{-x/2})+\E \Bigl\{\frac{\log^2(1+\eta)}{\log(1+e^{x/2})}
e^{\beta\xi}(1+\eta)^\beta;\eta>e^{x/2}\Bigr\}\nonumber\\
&=& o(1/x)\quad\mbox{ as }x\to\infty,
\end{eqnarray}
due to \eqref{maj.1.2}.
Substituting \eqref{est.rc.1} and \eqref{est.rc.2}
into \eqref{est.rc} we justify \eqref{cond.2.ver}
and the result for $D_n$ follows.

In order to prove asymptotics for $\widetilde D_n$, 
we first notice that inequality
\begin{eqnarray*}
(e^{\xi^+}+\eta)^\beta \log^2(e^{\xi^+}+\eta) &\le& 
(2e^{\xi^+})^\beta \log^2(2e^{\xi^+})+(2\eta)^\beta \log^2(1+2\eta),
\end{eqnarray*}
together with conditions $\E \xi^2 e^{\beta\xi}<\infty$ and 
$\E\eta^\beta\log^2(1+\eta)<\infty$ implies that
\begin{eqnarray*}
\E (e^{\xi^+}+\eta)^\beta \log^2(e^{\xi^+}+\eta) &<& \infty.
\end{eqnarray*}
This observation makes it possible to conclude the proof 
for $\widetilde D_n$ similarly to $D_n$.
\end{proof}

The last theorem is proven in the case $\eta>0$ only.
As mentioned above, in the case where $\eta$ takes negative values 
the condition \eqref{maj} may fail for $\xi(x)$ for $x<0$.
However the same proving arguments allow us to prove 
a conditional central limit theorem for
\begin{eqnarray*}
T_x &:=& \min\{n\ge 1:D_n> x\}
\end{eqnarray*}
without assumption that $\eta$ is positive.

\begin{Theorem}\label{thm:T}
Suppose that $\E e^{\beta\xi}=1$, $\alpha:=\E \xi e^{\beta\xi}<\infty$
and $\sigma^2:=\E \xi^2e^{\beta\xi}-\alpha^2<\infty$. 
Suppose also that $\E e^{\beta\xi}|\eta|^\beta<\infty$,
\begin{eqnarray}\label{maj.1.0.abs}
\E |\xi|e^{\beta\xi}|\eta|^\beta &<& \infty
\end{eqnarray}
and
\begin{eqnarray}\label{maj.1.2.abs}
\E (\xi^++\log(1+\eta^+))^2 e^{\beta\xi^+}(1+|\eta|)^\beta &<& \infty,
\end{eqnarray}
then
\begin{eqnarray*}
\P\{T_x\le n\mid T_x<\infty\} &=& 
{\mathcal N}_{0,\sigma^2}\bigg(\frac{n\alpha-\log x}
{\sqrt{\alpha^{-1}\log x}}\bigg) +o(1)
\end{eqnarray*}
as $x\to\infty$ uniformly in $n\ge0$.
In particular, for $n>\alpha^{-1}\log x+U(x)\sqrt{\log x}$
where $U(x)\to\infty$,
\begin{eqnarray*}
\P\{T_x\le n\mid T_x<\infty\} &\to& 1\quad\mbox{as }x\to\infty.
\end{eqnarray*}
\end{Theorem}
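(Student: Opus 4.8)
The plan is to reduce the assertion to the tail behaviour of the partial maxima $M_n=\max_{k\le n}D_k$. Since $\{T_x\le n\}=\{M_n>x\}$ and $\{T_x<\infty\}=\{M_\infty>x\}$ with $M_\infty:=\sup_nD_n<\infty$ a.s., we have
\begin{eqnarray*}
\P\{T_x\le n\mid T_x<\infty\} &=& \frac{\P\{M_n>x\}}{\P\{M_\infty>x\}},
\end{eqnarray*}
so it is enough to prove that, for some constant $\widehat c>0$,
\begin{eqnarray*}
\P\{M_n>x\} &=& \frac{\widehat c}{x^\beta}\,
{\mathcal N}_{0,\sigma^2}\Bigl(\frac{n\alpha-\log x}{\sqrt{\alpha^{-1}\log x}}\Bigr)+o(1/x^\beta)
\end{eqnarray*}
uniformly in $n\ge1$. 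Indeed, letting $n\to\infty$ (monotone convergence $M_n\uparrow M_\infty$, and ${\mathcal N}_{0,\sigma^2}\to1$) gives $\P\{M_\infty>x\}\sim\widehat c/x^\beta$, whence the quotient yields the theorem with the uniformity preserved; the final assertion follows because the argument of ${\mathcal N}_{0,\sigma^2}$ tends to $+\infty$ when $n>\alpha^{-1}\log x+U(x)\sqrt{\log x}$ with $U(x)\to\infty$. (If $\P\{\eta>0\}=0$ then $D_n\le0$ and $T_x=\infty$ a.s., so the statement is vacuous; we assume $\P\{\eta>0\}>0$.)

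The tail asymptotics for $M_n$ I would obtain, mimicking the proof of Theorem \ref{thm:Cramer.n}, by exhibiting a Markov chain associated with the maxima and applying Theorem \ref{thm:m.c.cramer} to it. Writing $D_{j+1}=\eta_1e^{\xi_1}+e^{\xi_1}D'_j$ as in \eqref{eq:D.D.prime} and taking the maximum over $j\le n$ together with $0$ gives
\begin{eqnarray*}
\max(0,D_1,\ldots,D_{n+1}) &=_{\rm st}& e^{\xi_1}\bigl(\eta_1+\max(0,D'_1,\ldots,D'_n)\bigr)^+,
\end{eqnarray*}
where $\max(0,D'_1,\ldots,D'_n)=_{\rm st}\max(0,D_1,\ldots,D_n)$ is independent of $(\xi_1,\eta_1)$. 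Hence the time-homogeneous Markov chain on $[0,\infty)$ with $V_0:=0$ and $V_{n+1}:=e^{\xi_{n+1}}(\eta_{n+1}+V_n)^+$ satisfies $V_n=_{\rm st}M_n^+$ for each fixed $n$, so $\P\{M_n>x\}=\P\{V_n>x\}$ for $x>0$. Put $X_n^\circ:=f(V_n)$; then $X_n^\circ$ is a Markov chain on $[0,\infty)$ with $\P\{M_n>x\}=\P\{X_n^\circ>\log x\}$ for $x>e$, and for a large state $y$ its jump equals $\xi+\log(1+\eta e^{-y})$ on $\{\eta>-e^y\}$ and $f(e^\xi\eta^+)-y\le{\rm const}-y$ on the complement. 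The key point of passing from $D_n$ to $M_n^+$ is that the positive part confines the chain to the half-line, so it never enters a ``deep'' region from which the jump would behave like $-\xi$; this is precisely the obstruction that forced $\eta>0$ in Theorem \ref{thm:Cramer.n}, and it is now absent.

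It then remains to verify the hypotheses of Theorem \ref{thm:m.c.cramer} for $X_n^\circ$ and to read off the conclusion. Condition \eqref{cond.3} with $\delta(x)=O(e^{-\min(1,\beta)x})$ is obtained exactly as in the proof of Theorem \ref{thm:Cramer} from $\E e^{\beta\xi}|\eta|^\beta<\infty$ (the event $\{\eta\le-e^x\}$, and the region where $f$ differs from the logarithm, contributing only $O(e^{-\beta x})$); in particular $\E e^{\beta\xi^\circ(y)}\ge1-O(e^{-\min(1,\beta)y})$, so $\widehat c>0$. Condition \eqref{maj} holds with the stochastic majorant $\overline\xi=\xi^++\log(1+\eta^+)$ (downward jumps being nonpositive), since \eqref{maj.1.2.abs} forces $\E\overline\xi^{\,2}e^{\beta\overline\xi}<\infty$; also $\E e^{\beta X_0^\circ}=1$ since $X_0^\circ=f(0)=0$, and $\E\xi^2e^{\beta\xi}=\sigma^2+\alpha^2<\infty$ with $\sigma^2=\varphi''(\beta)-\varphi'(\beta)^2$ as in Theorem \ref{thm:m.c.cramer}. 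Condition \eqref{cond.1} follows from $f(v)\ge\log v$ together with $\P\{\eta>0\}>0$, and \eqref{cond.2} is proven along the lines of \eqref{cond.2.ver}--\eqref{est.rc.2}: the term $\E|\xi|e^{\beta\xi}[(1+\eta^+e^{-y})^\beta-1]$ is $O(e^{-\min(1,\beta)y})$ by \eqref{maj.1.0.abs}, and $\E\log(1+\eta^+e^{-y})e^{\beta\xi}(1+\eta^+)^\beta$ is $o(1/y)$ by \eqref{maj.1.2.abs}. The one new feature, caused by $\eta$ being allowed negative, is the region $\{-e^y<\eta\le0\}$: there the relevant integrand takes the form $-\log(1-z)\,(1-z)^\beta$ with $z=|\eta|e^{-y}\in[0,1)$, a function bounded on $[0,1)$, so splitting at $|\eta|=e^{y/2}$ makes $-\log(1-z)$ of order $e^{-y/2}$ on $\{|\eta|\le e^{y/2}\}$ while on $\{|\eta|>e^{y/2}\}$ one uses $\E\{e^{\beta\xi}|\eta|^\beta;|\eta|>e^{y/2}\}\to0$. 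Once all the hypotheses are checked, Theorem \ref{thm:m.c.cramer} gives $\P\{X_n^\circ>x\}=\widehat c\,e^{-\beta x}{\mathcal N}_{0,\sigma^2}((n\alpha-x)/\sqrt{x/\alpha})+o(e^{-\beta x})$ uniformly in $n$, which upon substituting $x\mapsto\log x$ is the required asymptotics for $\P\{M_n>x\}$, completing the proof. The main obstacle is this last verification, in particular controlling \eqref{cond.2} (and, to a lesser degree, \eqref{maj}) in the regime where $\eta$ is negative with $|\eta|$ of order $e^y$; it is routine modulo the stated moment conditions, the genuine idea being the passage to the half-line chain $V_n$, which is what removes the positivity restriction on $\eta$.
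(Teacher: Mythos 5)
Your proof follows essentially the same route as the paper's: reduce to the tail of $M_n=\max_{k\le n}D_k$, represent $M_n^+$ via the nonnegative Lindley-type Markov chain $V_{n+1}=e^{\xi_{n+1}}(\eta_{n+1}+V_n)^+$ (the paper calls this ``duality'' and denotes it $M_n^*$, you derive it from the forward decomposition $D_{j+1}=\eta_1e^{\xi_1}+e^{\xi_1}D_j'$, but the resulting chain is identical), pass to $X_n^\circ=f(V_n)$, and verify the hypotheses of Theorem~\ref{thm:m.c.cramer}, with the only genuinely new estimate being the contribution of the region $\{-e^y<\eta\le 0\}$ to \eqref{cond.2}, handled by the same split at $|\eta|=e^{y/2}$ and the boundedness of $(1-z)^\beta|\log(1-z)|$ on $[0,1)$. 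The argument is correct and matches the paper's in every essential respect.
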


Large deviation estimates for exceedance times of
$\widetilde D_n$ are studied by Buraczewski et al. in \cite{BCDZ}.
In particular, Theorem 2.2 of that paper states the same result
as the last theorem but under essentially stronger moment 
assumptions on $\xi$ and $\eta$.

\begin{proof}
Firstly,
\begin{eqnarray*}
\P\{T_x\le n\mid T_x<\infty\} &=& \P\{M_n>x\mid M_\infty>x\}
\quad\mbox{where }M_n:=\max_{k\le n} D_k.
\end{eqnarray*}
Secondly, well known duality says that $M_n$ equals in distribution 
to $M_n^*$ defined by the recursion
\begin{eqnarray*}
M_n^* &=& e^{\xi_n}(M_{n-1}^*+\eta_n)^+,
\end{eqnarray*}
because
\begin{eqnarray*}
M_n^* &=& \max\bigl(0,e^{\xi_n}\eta_n,
e^{\xi_n}\eta_n+e^{\xi_n+\xi_{n-1}}\eta_{n-1},\ldots\bigr).
\end{eqnarray*}
The sequence $M_n^*$ is a nonnegative Markov chain. 
Consider the associated Markov chain $X_n^* := f(M_n^*)$---where 
the function $f$ is defined in \eqref{eq:f}---whose jumps are
\begin{eqnarray*}
\xi^*(x) &=& f(e^\xi(f^{-1}(x)+\eta)^+)-x\\
&=& f(e^\xi(e^x+\eta)^+)-x\quad\mbox{if }x>1\\
&=& \xi+\log(1+\eta e^{-x})\quad\mbox{if }x>1\mbox{ and }e^\xi(e^x+\eta)>e.
\end{eqnarray*}
Then, for $x>1$,
\begin{eqnarray*}
\P\{T_x\le n\mid T_x<\infty\} &=& 
\frac{\P\{X_n^*>\log x\}}{\P\{M_\infty>x\}},
\end{eqnarray*}
so it suffices to prove that the Markov chain $X_n^*$
satisfies all the conditions of Theorem \ref{thm:m.c.cramer}.
The same arguments as in the proof of Theorem \ref{thm:Cramer}
show that $X_n^*$ satisfies the condition \eqref{cond.3}.
The majorisation condition \eqref{maj} holds with
$\overline\xi=\xi^++\log(1+\eta^+)$ as in the proof 
of Theorem \ref{thm:Cramer.n} because $X_n^*$ is positive.
The condition \eqref{cond.1} is also clear. 

Concerning condition \eqref{cond.2}, let us notice that the proof 
of \eqref{cond.2.ver} only use positivity of $\eta$ in \eqref{est.rc.2}.
So, it remains to show that
\begin{eqnarray*}
\E \{|\log(1+\eta e^{-x})|e^{\beta\xi}(1+\eta e^{-x})^\beta;\ -e^x<\eta<0\}
&=& o(1/\sqrt x)\quad\mbox{ as }x\to\infty.
\end{eqnarray*}
Indeed, this expectation does not exceed the sum
\begin{eqnarray*}
\E \{|\log(1+\eta e^{-x})|e^{\beta\xi};\ -e^{x/2}<\eta<0\}
+c\E \{e^{\beta\xi};\ \eta\le -e^{x/2}\}
\end{eqnarray*}
where $c=\sup_{-1<t<0}(1+t)^\beta|\log(1+t)|<\infty$.
The first expectation is of order $O(e^{-x/2})$
while the second is not greater than
\begin{eqnarray*}
\E \Bigl\{\frac{(1+|\eta|)^\beta}{(1+e^{x/2})^\beta}
e^{\beta\xi};\ \eta>e^{x/2}\Bigr\}
&=& O(e^{-\beta x/2})\quad\mbox{ as }x\to\infty,
\end{eqnarray*}
because $\E(1+|\eta|)^\beta e^{\beta\xi}<\infty$ 
by the condition \eqref{maj.1.2.abs} and the proof is complete.
\end{proof}

\section{Markov modulated perpetuities}
\label{sec:Markov}

In this section we consider a Markov modulated perpetuity. 
Let $\Phi_n$ be a time homogeneous non-periodic
Markov chain in a general state-space ${\textsf X}$.
Assume this chain possesses a Harris recurrent atom $x_*\in{\textsf X}$ 
(the case of general Markov chain with splitting may also be considered;
see Tweedie \cite{Tweedie} for precise definitions.)
For simplicity, let $\Phi_0=x_*$.
Let $(\xi,\eta)$, $(\xi_1,\eta_1)$, $(\xi_2,\eta_2)$
be independent identically distributed random vectors
in $\R^2$ independent of the process $\Phi$.
The Markov modulated perpetuity takes the form
\begin{eqnarray*}
D_\infty &:=& \sum_{n=1}^\infty g(\Phi_n,\eta_n)e^{S_n}
\end{eqnarray*}
where the sum $S_n$ is defined as $S_0=0$ and
$S_{n+1}=S_n+f(\Phi_{n+1},\xi_{n+1})$.
The functions $f$, $g:{\textsf X}\times\R\to\R$ 
are assumed to be deterministic. 

The Markov chain $\Phi$ is assumed to be positive
recurrent with invariant measure $\rho$. 
Then the Harris Markov chain $(\Phi_n,\xi_n)$ in
${\textsf X}\times\R$ has invariant measure
$\rho\times F$ where $F$ is the distribution of $\xi$.
We suppose that the mean drift of $S_n$ in stationary
regime of $\Phi$ is negative, that is,
\begin{eqnarray}\label{neg.m}
a: &=& \int_{\textsf X}\E f(x,\xi)\rho(dx)\nonumber\\
&=& \int_{{\textsf X}\times\R}f(x,y)(\rho\times F)(dx,dy)
< 0.
\end{eqnarray}
Then by the strong law of large numbers for the Markov
chain $(\Phi_n,\xi_n)$ (see Tweedie \cite[Chapter 17]{Tweedie}),
$S_n/n\to -a<0$ a.s. as $n\to\infty$ and $D_\infty$
is finite with probability 1 provided that, for instance,
the family of random variables $\log(1+|g(x,\eta)|)$,
$x\in{\textsf X}$, possesses an integrable majorant.

Denote $\tau:=\min\{n\ge 1:\Phi_n=x_*\mid \Phi_0=x_*\}$,
so that $\pi(x_*)=1/\E\tau$.
Notice that under positive recurrence $D_\infty$
possesses the following representation based on the
cycle structure of the underlying Markov chain $\Phi$
$$
D_\infty=\widehat\eta_1 e^{\widehat S_1}
+\widehat\eta_2 e^{\widehat S_2}
+\widehat\eta_3 e^{\widehat S_3}+...
$$
where $\widehat\eta_n$ are independent random variables
distributed as
$$
\widehat\eta_1:=\sum_{k=1}^\tau g(\Phi_k,\eta_k)
e^{S_k-S_\tau}
=\sum_{k=1}^\tau g(\Phi_k,\eta_k)
e^{-(f(\Phi_{k+1},\xi_{k+1})+\ldots+f(\Phi_\tau,\xi_\tau))}
$$
and $\widehat S_n$ is the sum of other independent
identically distributed random variables distributed
as $\widehat S_1=\widehat\xi_1:=S_\tau$ given $\Phi_0=x_*$.
Then the problem of approximation of the probability
$\P\{D_\infty>x\}$ as $x\to\infty$ can be reduced to
a perpetuity with independent vectors.
Let us demonstrate this under the Cram\'er setting. The function
$$
\widehat\varphi(\lambda):=\int_{\textsf X}\E e^{\lambda f(x,\xi)}\rho(dx)
$$
is convex, $\widehat\varphi(0)=1$ and $\widehat\varphi'(0)=a<0$.
Assume that there exists a $\beta>0$ such that $\widehat\varphi(\beta)=1$.
Then $\E e^{\beta\widehat\xi}=\E e^{\beta S_\tau}=1$.
If, in addition,
\begin{eqnarray}\label{maj.mm.0}
\E \widehat\xi e^{\beta\widehat\xi}
&=& \E S_\tau e^{\beta S_\tau}\ <\ \infty
\end{eqnarray}
and
\begin{eqnarray}\label{maj.mm.1}
\E e^{\beta\widehat\xi}|\widehat\eta|^\beta
&=& \E \Bigl|\sum_{k=1}^\tau g(\Phi_k,\eta_k)e^{S_k}\Bigr|^\beta
\ <\ \infty,
\end{eqnarray}
then by Theorem \ref{thm:Cramer} applied to $(\widehat\xi,\widehat\eta)$
we get for Markov modulated perpetuity that
\begin{eqnarray*}
\P\{D_\infty>x\} &\sim& \frac{c}{x^\beta}\quad\mbox{ as }x\to\infty.
\end{eqnarray*}
In terms of local characteristics, the conditions \eqref{maj.mm.0} 
and \eqref{maj.mm.1} will be automatically fulfilled if, for example,
\begin{eqnarray*}
C_1 &:=& \sup_{x\in{\textsf X}} 
\E e^{\beta f(x,\xi)} |f(x,\xi)|\ <\ \infty,\\
C_2 &:=& \sup_{x\in{\textsf X}} 
\E e^{\beta f(x,\xi)} |g(x,\eta)|^\beta\ <\ \infty,\\
K &:=& \sup_{x\in{\textsf X}} \E e^{\beta f(x,\xi)}\ <\ \infty
\end{eqnarray*}
and $\E \tau^{\max(\beta,1)}K^\tau<\infty$;
sufficiency of these conditions is based on conditioning on the trajectory 
of $\Phi_k$ and further application of the inequality
\begin{eqnarray*}
|x_1+\ldots+x_n|^\beta &\le& n^{\max(\beta-1,0)}(|x_1|^\beta+\ldots+|x_n|^\beta).
\end{eqnarray*}

This approach for proving power asymtotics for Markov modulated perpetuity 
is a simple alternative to how it is done by de Saporta in \cite{S2005} 
and in more general setting by Blanchet et al. in \cite[Theorem 1]{BLZ}
via Perron--Frobenius theorem which requires
a Markov chain $\Phi_n$ to be finite and calls for 
excessive exponential moment conditions on $f(x,\xi)$
and excessive power moment conditions on $g(x,\eta)$.

\section{Limit theorems for transient perpetuities}
\label{sec:clt}

Let $(\xi,\eta)$, $(\xi_1,\eta_1)$, $(\xi_2,\eta_2)$,
\ldots\ be independent identically distributed random
vectors valued in $\R\times\R^+$ were $\xi$'s have common 
positive mean $a>0$ and $\E \log(1+\eta)<\infty$.
By the strong law of large numbers, $S_n/n\to a>0$ as
$n\to\infty$ with probability $1$, so that, for every
fixed $\varepsilon>0$ there exists a.s. finite $N=N(\omega)$
such that $(a-\varepsilon)n\le S_n(\omega)\le (a+\varepsilon)n$
and $\log(1+\eta_n)\le\varepsilon n$ for $n\ge N$. Therefore,
$$
\frac{\log D_n}{n}
=\frac{\log\sum_{k=1}^n \eta_ke^{S_k}}{n}
\stackrel{a.s.}\to a\quad\mbox{ as }n\to\infty.
$$
The weak convergence for $D_n$ is specified in the following theorem.

\begin{Theorem}\label{thm:clt}
Suppose that $a:=\E \xi>0$,
$\sigma^2:={\mathbb Var}\xi<\infty$ and $\E \log^2(1+\eta)<\infty$.
Then the following weak convergence holds:
\begin{eqnarray}\label{unst.normal}
\frac{\log D_n-an}{\sqrt{n\sigma^2}} &\Rightarrow& {\mathcal N}_{0,1}
\quad\mbox{as }n\to\infty.
\end{eqnarray}
If further the distribution of $\xi$ is nonlattice,
then for any fixed $\Delta>0$ 
\begin{eqnarray}\label{unst.normal.local}
\P\{\log D_n\in(x,x+\Delta]\} &=& \frac{\Delta}{\sqrt{2\pi\sigma^2 n}}
e^{-(x-na)^2/\sqrt{2n\sigma^2}} +o(1/\sqrt n)
\end{eqnarray}
as $n\to\infty$ uniformly in $x$.

The same results hold for $\widetilde D_n$.
\end{Theorem}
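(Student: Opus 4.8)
The plan is to turn $\log D_n$, up to equality in law, into a random walk plus an almost surely convergent correction, by time reversal, and then read off both assertions from the classical and the Stone local limit theorems for that walk. Since $((\xi_1,\eta_1),\dots,(\xi_n,\eta_n))$ has the same law as its reversal, for each fixed $n$ one has
$$
D_n \;=_{\rm st}\; e^{S_n}\,\widehat W_n,\qquad \widehat W_n:=\sum_{i=1}^{n}\eta_i e^{-S_{i-1}},
$$
hence $\log D_n=_{\rm st}S_n+\log\widehat W_n$ off the event $\{D_n=0\}$, whose probability $\P\{\eta=0\}^n\to0$. The sequence $\widehat W_n$ is nondecreasing, and since $\E(-\xi)=-a<0$ and $\E\log(1+\eta)<\infty$ it increases almost surely to a finite, almost surely strictly positive limit $\widehat W_\infty$; this is exactly the convergence of the perpetuity $\widetilde D_\infty$ built from the vector $(-\xi,\eta)$ recalled in the Introduction.

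For the weak convergence \eqref{unst.normal} this already suffices: $\log\widehat W_n\to\log\widehat W_\infty$ almost surely gives $\log\widehat W_n/\sqrt n\to0$ almost surely, the ordinary central limit theorem (using $\sigma^2=\Var\xi<\infty$) gives $(S_n-an)/\sqrt{n\sigma^2}\Rightarrow\mathcal N_{0,1}$, and Slutsky's lemma yields \eqref{unst.normal}. The passage to $\widetilde D_n$ is then routine: as noted in the Introduction, $\widetilde D_n$ built from $(\xi,\eta)$ is $D_n$ built from $(\xi,\eta e^{-\xi})$, and since $0\le\log(1+\eta e^{-\xi})\le\log(1+\eta)+\xi^-$ one has $\E\log^2(1+\eta e^{-\xi})\le 2\E\log^2(1+\eta)+2\E\xi^2<\infty$ while $\E\xi$ and $\Var\xi$ are unchanged, so every statement proven for $D_n$ transfers to $\widetilde D_n$ with the same $a$ and $\sigma^2$.

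For the local statement \eqref{unst.normal.local} I would isolate, inside $\log D_n=_{\rm st}S_n+\log\widehat W_n$, a block of ``fresh'' increments of $S_n$ independent of the bulk of the correction: write $S_n=S_m+(S_n-S_m)$ and $\log\widehat W_n=\log\widehat W_m+\theta_{m,n}$ with $0\le\theta_{m,n}\le\log\bigl(1+(\widehat W_\infty-\widehat W_m)/\widehat W_m\bigr)=:\bar\theta_m$, so that
$$
\log D_n\;=_{\rm st}\;(S_n-S_m)+Y_m+\theta_{m,n},\qquad Y_m:=S_m+\log\widehat W_m,
$$
with $S_n-S_m$ a sum of $n-m$ i.i.d.\ copies of $\xi$ independent of $\sigma\bigl((\xi_i,\eta_i)_{i\le m}\bigr)\ni Y_m$. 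Conditioning on that $\sigma$-field, applying Stone's local limit theorem to $S_n-S_m$ (legitimate because $\xi$ is non-lattice with finite variance), convolving with $\mathcal L(Y_m)$, and sandwiching via $0\le\theta_{m,n}\le\bar\theta_m$ produces the Gaussian right-hand side of \eqref{unst.normal.local} up to errors that must be pushed below $1/\sqrt n$. Equivalently, one may run the characteristic-function analysis for asymptotically homogeneous in space Markov chains directly on the associated chain $X_n=_{\rm st}\log D_n$, which here is transient to $+\infty$ with limiting jump $\xi$.

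The hard part is precisely this quantitative bookkeeping. One needs a \emph{rate} $\P\{\widehat W_\infty-\widehat W_m>\varepsilon\}=O(1/m)$: on $\{S_j\ge aj/2\ \text{for all}\ j>m\}$, whose complement has probability $O(1/m)$ by Kolmogorov's maximal inequality --- this is where $\Var\xi<\infty$ enters --- one has $\widehat W_\infty-\widehat W_m\le\sum_{j>m}\eta_j e^{-aj/2}$, and $\P\{\sum_{j>m}\eta_j e^{-aj/2}>\varepsilon\}=o(1/m)$ follows from $\sum_{j>m}\P\{\log(1+\eta)>cj\}=o(1/m)$, which is exactly what $\E\log^2(1+\eta)<\infty$ buys (and, I expect, the sole reason a second moment is imposed). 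The same input gives $\Var Y_m=O(m)$, controlling the random shift in the convolution, so that with a block length growing slowly with $n$ all remainders --- the discrepancies between lengths $n-m$ and $n$, between $x$ and $x-\bar\theta_m$, and between the Gaussian normalisations --- become $o(1/\sqrt n)$, after which one lets the auxiliary thresholds tend to zero. A last point: for $x$ far to the left of $an$ the Gaussian term is super-polynomially small, so one must separately verify that $\P\{\log D_n\le x\}$ is super-polynomially small there, which holds because $D_n$ is bounded below by its last few terms $\eta_k e^{S_k}$ with $k$ near $n$ and $S_k$ concentrated around $an$.
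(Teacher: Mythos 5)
Your argument takes a genuinely different route from the paper. The paper represents $D_n$ via the associated Markov chain $X_n=f(D_n)$, which is asymptotically homogeneous in space with limiting jump $\xi$, and then invokes the general CLT/LLT for such chains (Theorem~\ref{thm:m.c.clt}), so the whole proof becomes a verification of hypotheses \eqref{clt.1}, \eqref{clt.2}, \eqref{char.gen}, \eqref{c..n.gen} against a black box from \cite{K2001}. You instead use the exchangeability/time-reversal identity $D_n=_{\rm st}e^{S_n}\widehat W_n$ with $\widehat W_n=\sum_{i\le n}\eta_i e^{-S_{i-1}}$ a convergent (dual) perpetuity, turning $\log D_n$ into the random walk $S_n$ plus an a.s.\ convergent correction. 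For the weak limit \eqref{unst.normal} this is both simpler and sharper: Slutsky's lemma only needs $\widehat W_\infty$ finite and positive a.s., i.e.\ $\E\log(1+\eta)<\infty$, so the second-moment assumption $\E\log^2(1+\eta)<\infty$ — which the paper uses to check \eqref{clt.1} and the uniform integrability of $\xi^2(x)$ — is not needed for that part of the statement. Your transfer to $\widetilde D_n$ via $\eta\mapsto\eta e^{-\xi}$ and the bound $\log(1+\eta e^{-\xi})\le\log(1+\eta)+\xi^-$ is also clean and does the job uniformly.

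For the local statement \eqref{unst.normal.local}, however, there is a genuine gap in the sandwiching as you describe it. You propose to condition on $\mathcal F_m:=\sigma\bigl((\xi_i,\eta_i)_{i\le m}\bigr)$ and then apply Stone's theorem to $S_n-S_m$; but the correction $\theta_{m,n}=\log(\widehat W_n/\widehat W_m)$ and your majorant $\bar\theta_m=\log\bigl(1+(\widehat W_\infty-\widehat W_m)/\widehat W_m\bigr)$ are \emph{not} $\mathcal F_m$-measurable --- both involve $(\xi_i,\eta_i)_{i>m}$, the very increments that drive $S_n-S_m$ --- so they cannot be ``sandwiched inside'' a conditional application of Stone. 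The sandwich must be performed first, unconditionally, e.g.
$$
\P\{(S_n-S_m)+Y_m\in(x,x+\Delta-\delta]\}-\P\{\bar\theta_m\ge\delta\}
\;\le\;\P\{\log D_n\in(x,x+\Delta]\}
\;\le\;\P\{(S_n-S_m)+Y_m\in(x-\delta,x+\Delta]\}+\P\{\bar\theta_m\ge\delta\},
$$
and only then may one condition the clean middle events on $\mathcal F_m$. Two more points need care in the bookkeeping you flag. First, the rate $\P\{\bar\theta_m\ge\delta\}=O(1/m)$ must be $o(1/\sqrt n)$ while the replacement $n-m\to n$ must cost $o(1/\sqrt n)$, which forces the block length to satisfy $\sqrt n\ll m\ll n$; that range is nonempty, but it has to be chosen and used consistently. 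Second, $\log\widehat W_m$ is bounded below only by $\log\eta_1$, and $\E\log^2(1+\eta)<\infty$ gives no control on the negative tail of $\log\eta_1$; so $Y_m$ may have a heavy left tail, and your remark that the Gaussian is super-polynomially small for $x$ far below $an$ does not by itself resolve how that tail is integrated against the conditional LLT at moderate $x$. This is exactly why the paper works with the truncated logarithm $f$ of \eqref{eq:f} rather than $\log$: the associated chain then lives on a half-line where the tools of \cite{K2001} apply cleanly. Your route can likely be repaired (truncate $Y_m$ and absorb the tail into the error), but as written the LLT half is incomplete.
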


Various limit behaviour of unstable perpetuities including
convergence \eqref{unst.normal} determined by the central limit theorem 
were studied in Rachev and Samorodnitsky \cite{RS}, 
Hitczenko and Weso\l owski \cite{HW}; Markov modulated perpetuities were
considered in Basu and Roitershtein \cite{BR}. 
Notice that the standard technique---simpler than 
presented here---used in these papers for proving limit theorems 
does not allow to derive a local result like \eqref{unst.normal.local}.

Our proof of Theorem \ref{thm:clt} is based on the following result.

\begin{Theorem}[\cite{K2001}]\label{thm:m.c.clt}
Suppose that the Markov chain $X_n$ on $\R$ 
goes to infinity almost surely. Let, for some $a>0$ and $\sigma^2>0$,
\begin{eqnarray}\label{clt.1}
\E \xi(x) &=& a+o(1/\sqrt x),\\
{\mathbb Var}\xi(x) &\to& \sigma^2
\quad\mbox{ as }x\to\infty,\label{clt.2}
\end{eqnarray}
and the family $\{\xi^2(x),x>1\}$ of squares of the
jumps is integrable uniformly in $x$. Then
\begin{eqnarray}\label{ccl.m.c}
\frac{X_n-an}{\sqrt{n\sigma^2}} &\Rightarrow& {\mathcal N}_{0,1}
\quad\mbox{ as }n\to\infty.
\end{eqnarray}
Let, in addition, the Markov chain $X_n$ be asymptotically 
homogeneous in space, that is $\xi(x)\Rightarrow\xi$ as $x\to\infty$, 
the distribution of $\xi$ be nonlattice, for any $A>0$
\begin{eqnarray}\label{char.gen}
\sup_{x>na/2,\ \lambda\in[-A,A]}
\bigl|\E e^{i\lambda\xi(x)}-\E e^{i\lambda\xi}\bigr| &=& o(1/n)
\end{eqnarray}
as $n\to\infty$ and
\begin{eqnarray}\label{c..n.gen}
\P\bigl\{X_k\le ka/2\ \mbox{ for some }\ k\ge n\bigr\} &=& o(1/\sqrt n).
\end{eqnarray}
Then for any fixed $\Delta>0$ 
\begin{eqnarray}\label{ccl.m.c.local}
\P\{X_n\in(x,x+\Delta]\} &=& \frac{\Delta}{\sqrt{2\pi\sigma^2 n}}
e^{-(x-na)^2/\sqrt{2n\sigma^2}} +o(1/\sqrt n)
\end{eqnarray}
as $n\to\infty$ uniformly in $x$.
\end{Theorem}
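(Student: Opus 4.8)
\textbf{Proof proposal for Theorem \ref{thm:clt}.}

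The plan is to apply Theorem \ref{thm:m.c.clt} to the associated Markov chain $X_n$ of the perpetuity $D_n$, whose jumps in the logarithmic scale are $\xi(x)=\xi+\log(1+\eta e^{-x})$ for $x$ large (since $\eta\ge 0$, so $D_n>0$). The identity $\P\{\log D_n\in(x,x+\Delta]\}=\P\{X_n\in(x,x+\Delta]\}$ for $x>1$ then transfers both \eqref{ccl.m.c} and \eqref{ccl.m.c.local} to the perpetuity. First I would check that $X_n\to\infty$ a.s.: this follows from $S_n/n\to a>0$ a.s.\ together with $\log(1+\eta_n)\le\varepsilon n$ eventually, exactly as in the paragraph preceding the theorem. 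Next, the drift and variance conditions: since $0\le\log(1+\eta e^{-x})\le\min(\eta e^{-x},\log(1+\eta))$, the dominated convergence theorem with dominant $\eta e^{-x_0}$ gives $\E\log(1+\eta e^{-x})=O(e^{-x})$ once we use $\log(1+\eta e^{-x})\le\eta e^{-x}$ and $\E\eta<\infty$ (which follows from $\E\log^2(1+\eta)<\infty$ is not quite enough, so here I would instead bound $\E\log(1+\eta e^{-x})\le\E\min(\eta e^{-x},\log(1+\eta))$ and split at $\eta\le e^{x/2}$ to get $o(1/\sqrt x)$ from $\E\log(1+\eta)<\infty$ alone). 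Hence $\E\xi(x)=a+o(1/\sqrt x)$, which is \eqref{clt.1}. A similar splitting argument, using $\E\log^2(1+\eta)<\infty$ to control the second moment of the perturbation, gives ${\mathbb Var}\,\xi(x)\to{\mathbb Var}\,\xi=\sigma^2$, which is \eqref{clt.2}, and also shows that $\{\xi^2(x)\}$ is uniformly integrable (the perturbation is dominated by $\log(1+\eta)$, whose square is integrable, and $\xi^2$ is integrable by hypothesis).

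For the local limit theorem part I would verify the three additional hypotheses of Theorem \ref{thm:m.c.clt}. Asymptotic space-homogeneity $\xi(x)\Rightarrow\xi$ is immediate since $\log(1+\eta e^{-x})\to 0$ in probability, and non-latticeness of $\xi$ is assumed. For the characteristic-function estimate \eqref{char.gen}, I would write
\begin{eqnarray*}
\bigl|\E e^{i\lambda\xi(x)}-\E e^{i\lambda\xi}\bigr|
&=&\bigl|\E e^{i\lambda\xi}\bigl(e^{i\lambda\log(1+\eta e^{-x})}-1\bigr)\bigr|
\ \le\ \E\bigl|e^{i\lambda\log(1+\eta e^{-x})}-1\bigr|\\
&\le&\E\min\bigl(2,\ |\lambda|\log(1+\eta e^{-x})\bigr),
\end{eqnarray*}
and again split the expectation at $\eta\le e^{x/2}$: on $\{\eta\le e^{x/2}\}$ the integrand is at most $|\lambda|\eta e^{-x}\le Ae^{-x/2}$, and on $\{\eta>e^{x/2}\}$ it is at most $2\P\{\eta>e^{x/2}\}$, which tends to $0$ faster than $1/n$ because $\E\log^2(1+\eta)<\infty$ forces $\P\{\log(1+\eta)>t\}=o(1/t^2)$ and here $t\asymp x\asymp\log n$ on the relevant range $x>na/2$ — in fact even $\P\{\eta>e^{x/2}\}=o(1/x^2)$, uniformly, which is $o(1/n)$ since $x>na/2$. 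So \eqref{char.gen} holds. The remaining condition \eqref{c..n.gen}, namely $\P\{X_k\le ka/2\text{ for some }k\ge n\}=o(1/\sqrt n)$, I would deduce from the stochastic minorant $\xi(x)\ge_{\rm st}\xi$ (valid since $\eta\ge 0$): this gives $X_k\ge X_1+S'_{k-1}$ for a copy $S'$ of the random walk with increments $\xi$, reducing the claim to a classical estimate for the random walk $\{S'_k\}$ with positive drift, $\P\{S'_k\le ka/3\text{ for some }k\ge n\}=o(1/\sqrt n)$, which follows from $\sigma^2<\infty$ (e.g.\ via a maximal inequality after centering, or from the fact that $\P\{\inf_{k\ge n}(S'_k-ak)\le -\varepsilon n\}$ decays faster than any polynomial when the increments have finite variance — a union bound over $k\ge n$ of $\P\{S'_k-ak\le -ak/6\}$ together with Chebyshev gives a $\sum_{k\ge n}k\cdot k^{-2}$-type tail, which is only $O(\log)$, so here I would instead use that $\inf_{k\ge 1}S'_k$ is a.s.\ finite, hence $\P\{\inf_{k\ge 1}S'_k\le -ka/2\text{ for the relevant }k\}$ is handled by pushing the starting point $X_1$ and using that the infimum has finite expectation).

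The main obstacle is the verification of \eqref{c..n.gen} with the sharp rate $o(1/\sqrt n)$, since a naive union bound over $k\ge n$ of large-deviation estimates for $S'_k-ak$ under only a second-moment assumption does not obviously beat $1/\sqrt n$. The clean way around this is to note that, because $\eta\ge 0$, once $X_k$ exceeds a fixed level $x_0$ it dominates a random walk started afresh with increments $\xi$, and the event in \eqref{c..n.gen} is then controlled by $\P\{X_n\le na/2\}$ (which is small by the CLT \eqref{ccl.m.c} already proven, giving $o(1)$) refined via the running-minimum representation: $\{X_k\le ka/2\text{ some }k\ge n\}\subseteq\{X_n\le 3na/4\}\cup\{\inf_{k\ge n}(X_k-X_n)\le -na/4\}$, and the second event has probability bounded by $\P\{\inf_{j\ge 0}S'_j\le -na/4\}$, which decays geometrically in $n$ since the strictly-negative-drift-free random walk $S'$ has an all-moments-finite (indeed exponentially-tailed is too strong, but at least polynomially-fast-to-any-order under finite variance is false too) — the correct statement I would invoke is Kesten's or the elementary fact that for a random walk with positive drift and finite variance, $\E\bigl(\inf_j S'_j\bigr)^2<\infty$, whence $\P\{\inf_j S'_j\le -t\}=o(1/t^2)=o(1/n^2)$, which is certainly $o(1/\sqrt n)$. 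For the first event, $\P\{X_n\le 3na/4\}$ is $o(1/\sqrt n)$ by the local CLT already in force (or by a moderate-deviation bound), completing the check. Once all hypotheses of Theorem \ref{thm:m.c.clt} are in place, \eqref{unst.normal} and \eqref{unst.normal.local} follow verbatim for $D_n$. Finally, the statements for $\widetilde D_n$ follow from the remark in the introduction that $\widetilde D_n$ is the perpetuity $D_n$ built from the reference vector $(\xi,\eta e^{-\xi})$; one checks that $\E\log^2(1+\eta e^{-\xi})\le 2\E\log^2(1+\eta)+2\E(\xi^-)^2<\infty$ under $\sigma^2<\infty$ and $\E\log^2(1+\eta)<\infty$, and that the drift and variance of $\xi$ are unchanged (they depend only on the first coordinate), so the same argument applies.
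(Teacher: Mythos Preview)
Your overall strategy---verify the hypotheses of Theorem~\ref{thm:m.c.clt} for the associated chain $X_n$---is exactly what the paper does, and your treatments of \eqref{clt.1}, \eqref{clt.2}, uniform integrability, and \eqref{char.gen} are close in spirit (the paper bounds $|e^{i\theta}-1|$ via $\sqrt{2(1-\cos\theta)}$ rather than your $\min(2,|\theta|)$, but the split at $\eta\le e^{x/2}$ is the same). One minor slip: your claim that the bound $\E\log(1+\eta e^{-x})=o(1/\sqrt x)$ follows ``from $\E\log(1+\eta)<\infty$ alone'' is not correct; integrability of $\log(1+\eta)$ only gives $o(1)$ on the piece $\{\eta>e^{x/2}\}$. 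You need the full hypothesis $\E\log^2(1+\eta)<\infty$, and the paper uses it to get the sharper $o(1/x)$ via $\E\{\log(1+\eta);\,\eta>e^{x/2}\}\le \E\log^2(1+\eta)/\log(1+e^{x/2})$.

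The substantive gap is in your verification of \eqref{c..n.gen}. Your assertion that $\E\bigl(\inf_{j\ge 0}S'_j\bigr)^2<\infty$ under only a second-moment assumption on $\xi$ is false in general: for a random walk with positive drift, $\E|\min_n S_n|^r<\infty$ holds iff $\E(\xi^-)^{r+1}<\infty$, so finite variance delivers only a finite \emph{first} moment of the global infimum, not a second. And invoking ``the local CLT already in force'' to bound $\P\{X_n\le 3na/4\}$ is circular, since \eqref{c..n.gen} is one of the hypotheses you are checking for that very local CLT. Your decomposition can be repaired (Markov's inequality on $|\min_j S'_j|$ gives $O(1/n)$ for the infimum piece, and Chebyshev on the random-walk minorant gives $O(1/n)$ for the other), but the paper's route is much cleaner and worth knowing: it uses the minorant $X_k\ge S_k$ (valid since $\eta>0$ makes every jump $\ge\xi_k$), observes that $\bigl((S_k-ka)/k\bigr)_{k\ge n}$ is a reverse martingale, and applies Kolmogorov's maximal inequality to obtain
\[
\P\Bigl\{\sup_{k\ge n}\Bigl|\frac{S_k-ka}{k}\Bigr|\ge\frac{a}{2}\Bigr\}
\ \le\ \frac{4\,{\mathbb Var}\,S_n}{a^2 n^2}\ =\ O(1/n)\ =\ o(1/\sqrt n),
\]
disposing of the ``for some $k\ge n$'' in one stroke with no decomposition and no appeal to moments of the infimum.
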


\begin{proof}[Proof of Theorem \ref{thm:clt}]
It is sufficient to check that the Markov chain $X_n$ associated to $D_n$
satisfies all the conditions of Theorem \ref{thm:m.c.clt}.

To prove that \eqref{clt.1} is satisfied, first note that $\eta>0$
implies $\xi(x)\ge\xi+\log(1+\eta e^{-x})\ge\xi$. In addition,
\begin{eqnarray*}
\E \log(1+\eta e^{-x})
&=& \E \{\log(1+\eta e^{-x});\eta\le e^{x/2}\}
+\E \{\log(1+\eta e^{-x});\eta>e^{x/2}\}\\
&\le& e^{-x/2}+\E\Bigl\{\frac{\log^2(1+\eta)}{\log(1+e^{x/2})};
\eta>e^{x/2}\Bigr\}\\
&=& e^{-x/2}+o(1/x)\quad\mbox{ as }x\to\infty,
\end{eqnarray*}
by the condition $\E \log^2(1+\eta)<\infty$ 
which is even better than \eqref{clt.1}.

Since $\xi^2(x)\le 2\xi^2+2\log^2(1+\eta)$ for all $x>0$
and the mean of the right hand side is finite,
the family $\{\xi^2(x),x>0\}$
is integrable uniformly in $x$. In particular,
then the convergence \eqref{clt.2} follows and hence \eqref{unst.normal}.

Further, \eqref{char.gen} follows because
\begin{eqnarray*}
\bigl|\E e^{i\lambda\xi(x)}-\E e^{i\lambda\xi}\bigr| &=& 
\bigl|\E e^{i\lambda\xi}(1-e^{i\lambda\log(1+\eta e^{-x})})\bigr|\\
&\le& \E \bigl|1-e^{i\lambda\log(1+\eta e^{-x})}\bigr|\\
&\le& \sqrt{2\E(1-\cos(\lambda\log(1+\eta e^{-x})))}
\end{eqnarray*}
which implies for $|\lambda|\le A$ 
\begin{eqnarray*}
\lefteqn{\bigl|\E e^{i\lambda\xi(x)}-\E e^{i\lambda\xi}\bigr|}\\
&\le& 2\sqrt{\P\{\eta>e^{x/2}\}}+\sqrt{2(1-\E\{\cos(A\log(1+\eta e^{-x})));\ 
\eta\le e^{x/2}\}}\\
&=& o(1/x)+O(e^{-x/2}).
\end{eqnarray*}
Finally, \eqref{c..n.gen} is satisfied due to
\begin{eqnarray*}
\P\bigl\{X_k\le ka/2\mbox{ for some }k\ge n\bigr\} &\le& 
\P\bigl\{S_k\le ka/2\mbox{ for some }k\ge n\bigr\}\\
&\le& \P\Bigl\{\frac{S_k-ka}{k}\le -\frac{a}{2}\mbox{ for some }k\ge n\Bigr\},
\end{eqnarray*}
where the sequence $(S_k-ka)/k$, $k=n$, $n+1$, \ldots\ constitutes 
a reverse martingale which allows to apply Kolmogorov's inequality
for martingales:
\begin{eqnarray*}
\P\Bigl\{\frac{S_k-ka}{k}\le -\frac{a}{2}\mbox{ for some }k\ge n\Bigr\}
&\le& \P\Bigl\{\sup_{k\ge n}\Bigl|\frac{S_k-ka}{k}\Bigr| \ge \frac{a}{2}\Bigr\}\\
&\le& \frac{4{\mathbb Var}S_n}{a^2n^2}\\
&=& O(1/n)=o(1/\sqrt n).
\end{eqnarray*}
So the Markov chain $X_n$ associated to $D_n$ satisfies 
all the conditions of Theorem \ref{thm:m.c.clt} and the proof is complete.
\end{proof}

A local limit analysis of the Green function for asymptotically
homogeneous in space transient Markov chains is done in \cite{Korshunov2008}.
If $\E\xi>0$ and $\E\log(1+\eta)<\infty$, 
then Theorem 1 from that paper is applicable to the associated 
Markov chain $X_n$ and in the case where the distribution of $\xi$ 
is non-lattice we obtain that, for every fixed $h>0$,
$$
\sum_{n=1}^\infty \P\{\log D_n\in(x,x+h]\} \ \to\ \frac{h}{\E\xi}
\quad\mbox{as }x\to\infty.
$$ 

Let us also mention a link to the so-called Lamperti's problem
which is about the limit behaviour of a Markov chain with asymptotically zero 
drift, that is, when $\E\xi(x)\to 0$ as $x\to\infty$,
it goes back to Lamperti \cite{Lamperti}. 
It was proven by Denisov et al. \cite[Theorem 4]{DKW} 
that if we consider a Markov chain $X_n$ such that 
\begin{itemize}
\item[(i)] $\E\xi(x)\sim \mu/x$ and $\E\xi^2(x)\to b$ as $x\to\infty$, $\mu>-b/2$, 
\item[(ii)] the family $\{\xi^2(x), x\}$ possesses an integrable majorant,
\item[(iii)] $X_n\to\infty$ in probability as $n\to\infty$, 
\end{itemize}
then $X_n^2/n$ converges weakly to the $\Gamma$-distribution 
with mean $2\mu+b$ and variance $(2\mu+b)2b$. 

If we consider an unstable perpetuity with $\E\xi=0$, 
$\sigma^2:={\mathbb Var}\xi^2<\infty$ and $\E\log^2(1+\eta)<\infty$, 
then the associated Markov chain $X_n$ for $D_n$ 
satisfies the above conditions with $\mu=0$ and $b=\sigma^2$ and hence
is null-recurrent and 
\begin{eqnarray*}
\frac{\log^2 D_n}{n} &\Rightarrow& \Gamma_{1/2,\ 2\sigma^2}=N^2(0,\sigma^2)
\quad\mbox{as }n\to\infty,
\end{eqnarray*}
first proven by Hitczenko and Weso\l owski in \cite[Theorem 3(i)]{HW}.

\end{document}